\documentclass[10pt]{amsart}

\usepackage{amsfonts}
\usepackage{amsmath}
\usepackage{amssymb}
\usepackage{amsthm}
\usepackage{dynkin-diagrams}
\usepackage[alphabetic]{amsrefs}
\usepackage{calc}
\usepackage{graphicx}
\usepackage{hyperref}
\usepackage[left=1in,top=1in,right=1in,foot=1in]{geometry}
\usepackage{todonotes}
\usepackage{microtype}
\usepackage{tikz-cd}
\usepackage{tikz}
\usepackage{float}

\DeclareMathOperator{\spn}{span}


\newcommand{\sets}[2]{\left\{#1\,\middle|\,#2\right\}}
\newcommand{\genrel}[2]{\left\langle #1\,\middle|\,#2\right\rangle}

\newcommand{\stab}[2]{\mathrm{Stab}_{#1}(#2)}

\DeclareMathOperator{\Rep}{Rep}
\DeclareMathOperator{\Mat}{Mat}
\newcommand{\Bb}{\mathcal{B}}


\newcommand{\Z}{\mathbb{Z}}
\newcommand{\C}{\mathbb{C}}



\newcommand{\ssp}{\mathfrak{sp}}

\DeclareMathOperator{\Lie}{\mathrm{Lie}}

\newcommand{\GL}{\mathrm{GL}}
\newcommand{\PGL}{\mathrm{PGL}}
\newcommand{\SL}{\mathrm{SL}}

\newcommand{\SO}{\mathrm{SO}}
\newcommand{\Sp}{\mathrm{Sp}}

\newcommand{\Spin}{\mathrm{Spin}}
\newcommand{\Gm}{{\mathbb{G}_\mathrm{m}}}

\newcommand{\id}{\mathrm{id}}
\newcommand{\Spr}{\mathrm{Sp}_r}

\newcommand{\fm}{\mathfrak{m}}
\newcommand{\N}{\mathcal{N}}
\newcommand{\Nt}{\tilde{\mathcal{N}}}

\newtheorem{theorem}{Theorem}
\newtheorem{cor}{Corollary}
\newtheorem{prop}{Proposition}
\newtheorem{lem}{Lemma}

\newtheorem{conj}{Conjecture}

\theoremstyle{definition}
\newtheorem{dfn}{Definition}

\theoremstyle{remark}
\newtheorem{ex}{Example}
\newtheorem{rem}{Remark}

\newcommand{\F}{\mathcal{F}}
\newcommand{\Ce}{{\mathbb C}}
\newcommand{\Zet}{{\mathbb Z}}
\newcommand{\LG}{{ G\check{\ }}}
\newcommand{\LB}{{ B\check{\ }}}
\newcommand{\Lg}{{ \mathfrak g\check{\ }}}
\newcommand{\BB}{{\mathcal B}}
\renewcommand{\O}{{\mathcal O}}

\newcommand{\bbA}{{\mathbb A}}
\newcommand{\bP}{{\mathbb P}}

\newcommand{\gt}{{\tilde{\mathfrak g}\check{\ }}}

\newcommand{\Co}{{\bf Z}}


\makeatletter
\let\@wraptoccontribs\wraptoccontribs
\makeatother

\title{On the structure of the affine asymptotic Hecke algebras}

\author{Roman Bezrukavnikov}
\address{Department of Mathematics, MIT, Cambridge MA 02139 USA}
\email{bezrukav@math.mit.edu}

\author{Stefan Dawydiak}
\address{Max-Planck-Institut f\"ur Mathematik, 
Vivatsgasse 7, 53111 Bonn Germany}
\email{dawydiak@mpim-bonn.mpg.de}

\author{Galyna Dobrovolska}
\address{Ariel University, Ariel 40700, Israel}
\email{galdobr@gmail.com}

\address{Department of Mathematics, University of Toronto and Perimeter Institute
of Theoretical Physics, Waterloo, Ontario, Canada, N2L 2Y5
}
\email{braval@math.toronto.edu}

\address{The Hebrew University of Jerusalem
Givat Ram. Jerusalem 9190401, Israel} 
\email{kazhdan@math.huji.ac.il}

\contrib[With an appendix by]{Roman Bezrukavnikov, Alexander Braverman, and David Kazhdan}

\dedicatory{To the memory of James Humphreys}

\begin{document}
\begin{abstract} According to a conjecture of Lusztig, the asymptotic affine Hecke algebra should admit a description
in terms of the Grothedieck group of sheaves on the square of a finite set equivariant under the action of the 
centralizer of a nilpotent element in the reductive group. 
A weaker form of this statement, allowing for possible central extensions of stabilizers of that action, has been proved by 
the first named author with  Ostrik. In the present paper we describe an example showing
that nontrivial central extensions do arise, thus the above weaker statement is optimal. 

We also show that Lusztig's homomorphism from  the affine Hecke algebra
to the asymptotic
affine Hecke algebra induces an isomorphism on cocenters and discuss
the relation of the above central extensions to the structure of the cocenter.
\end{abstract}
\maketitle
\section{Introduction}
Let $G$ be a reductive algebraic group over $\Ce$. Let $W_f$ be its Weyl group and $W$ be the 
extended affine Weyl group, thus $W=W_f\ltimes \Lambda$ is the semi-direct product of $W_f$ with
the coweight lattice  $\Lambda$. Let 
$H$ be the 
affine Hecke algebra, 
thus $H$ is a deformation of the group algebra of $W$, i.e. $H$ is an algebra over $\Z[v,v^{-1}]$
such that $H/(v-1)=\Z[W]$.

The affine Hecke algebra $H$ plays a key role in several chapters of geometric representation theory,
such as representations of $p$-adic groups, affine Lie algebras and (geometric) Langlands duality.

Lusztig in his pioneering works \cite{cells2}, \cite{cells3} and \cite{cells4} has introduced a related algebra $J$ 
that can be thought of as a limit of $H$ as $v$ tends to $0$. In \cite{cells2} he introduced a homomomorphism
$\phi\colon H\to J\otimes_\Z\Z[v,v^{-1}]$ possessing a number of remarkable properties that provide a close link between the two algebras,
their representations etc. (see \cite{cells4}).

This link motivates a closer study of the algebra $J$. In \cite[Conjecture 10.5a)]{cells4} Lusztig conjectures a description of $J$
in terms of the Langlands dual group $\LG$.
More precisely, if $G$ is adjoint
 then for each nilpotent 
element in $\Lg=\Lie(\LG)$ 
the conjecture predicts existence of a finite set $Y_e$ with an action of the centralizer $Z_e$ of $e$ in $\LG$
so that
\begin{equation}\label{KZJ}
\bigoplus\limits_e K^{Z_e}(Y_e\times Y_e)\cong J_e,
\end{equation}
where the sum runs over a set of representatives of nilpotent conjugacy classes in $\Lg$ and 
$K^{Z_e}$ stands for the Grothendieck group of equivariant sheaves. 

In \cite{BO} a weaker form of this conjecture has been proved: namely, it established 
an isomorphism similar to \eqref{KZJ} where the left hand side is modified allowing for  central
extensions of the stabilizers   $\stab{Z_e}{y}$,   $y\in Y_e\times Y_e$. In the present paper we provide an example
showing that the stronger statement does not hold in general, i.e. that nontrivial central extensions have to be considered 
in order to account for the structure of a specific two-sided cell in the affine Weyl group of type $\tilde{B}_3$. 

We remark that the weak form of the conjecture also holds without the assumption that $G$ is adjoint, although it is quite easy to see
appearance of nontrivial central extension in that setting, the example is already provided by the lowest two-sided cell for $G=\SL(2)$,
$\LG=\PGL(2)$.  In the example considered in this paper we have $G=\SO(7)$, so $\LG=\Sp(6)$ is simply connected, however,
the centralizer of the nilpotent is not: the reductive part of its neutral connected component is isomorphic to $\PGL(2)$.

The idea to consider this particular example comes from the work of Losev and Panin \cite{LP} where a similar phenomenon is studied 
 in the setting of representations of finite W-algebras over a field of characteristic zero.

We present two independent arguments showing appearance of nontrivial central extensions. The first  one (see section
\ref{compB3}) is based on  an explicit computation of a summand in the algebra $J$
showing the structure of the based ring is inconsistent with an isomorphism (as based rings) $J_e\cong K^Z(Y\times Y)$.
A similar calculation has been accomplished recently by Yannan Qiu and Nanhua Xi  \cite{QX} (following
up on a question posed by the first named author); they have reached the same conclusion about existence of nontrivial central extensions. 
By considering the structure of the cocenter as a module over the center we show that $J_e$ is not isomorphic to $K^Z(Y\times Y)$ even as an 
abstract ring (see Corollary \ref{cor_Je}).

The second  argument (section \ref{section the springer fiber}) is based on geometry of a Springer fiber in the group $\LG=\Sp(6)$, which is linked to the structure of $J$ via
the noncommutative Springer resolution \cite{BeICM} by the results of \cite{B_two_real},  \cite{BL}. It shows that a specific realization
of $J_e$ in terms of a finite $Z_e$-set of centrally extended points does involve nontrivial central extensions. This approach links this phenomenon
to a geometric property of the corresponding Springer fiber, surjectivity of the map from the equivariant to the non-equivariant $K$-group.

In our example the group $Z_e$ turns out to be connected modulo the center of $\LG$.
 In this case  the conjecture  asserts\footnote{Here \label{footn} we use an additional assumption
 that the center of $\LG$ acts trivially on the set $Y$. We believe it is implicit in \cite[Conjecture 10.5a)]{cells4}, as it  follows from \cite[Conjecture 10.5c)]{cells4}. This is also a feature of the constructions
proving the weak form of the conjecture, see the proof of Lemma \ref{lem1}. However, Corollary \ref{cor_Je} excludes an isomorphism between $J_e$ and $K^Z(Y\times Y)$ even allowing for a nontrivial action of the center of $\LG$.}
that the corresponding summand $J_e$ of $J$ is isomorphic as a based algebra
to a matrix algebra $\Mat_n(\Rep(Z_e))$ where $\Rep(Z_e)$
is the representation ring of $Z_e$. 

\medskip

The second main theme of the paper is the structure of the cocenter $C=J/[J,J]=HH_0(J)$ of $J$. 
 Considering that cocenter is partly motivated by the 
 fact that by virtue of Lusztig's homomorphism $\phi:H\to J[v,v^{-1}]$ (or rather its specialization $\phi_q$ at $v=\sqrt{q}$ where $q$ is a power of a prime $p$)  a linear functional
 on the cocenter of $J$ defines an invariant distribution on the corresponding $p$-adic group.
 We prove (see Theorem \ref{cocenter_Thm}) that $\phi_q$ induces an isomorphism $\bar{\phi_q}$ on cocenters, thus the asymptotic Hecke algebra can be used to study invariant distributions. Surjectivity of the map
 $\bar{\phi_q}$ is proved by a direct computational argument; the proof of injectivity based on density of characters of standard representations appears in the Appendix by the first 
 named author, Braverman and Kazhdan. The appendix also contains two conjectures: Conjecture \ref{Phi} is concerned with extension of Theorem 
 \ref{cocenter_Thm} to the generalization of algebra $J$ and homomorphism $\phi_q$  defined in \cite{BrKa};
 Conjecture \ref{conj} (joint with Varshavsky) 
 gives an explicit description of the cocenter 
 in terms of the dual group. 
 
 The central extensions whose existence is demonstrated in the paper turn out to be closely related to 
the structure of the cocenter. 
 In particular, Conjecture \ref{conj} implies that appearance of nontrivial central extensions
 is a typical rather than an exceptional phenomenon, see Remark \ref{last_remark}. The explicit calculations done in the paper yield the proof of the conjecture
 in the special case under consideration.

\subsection{Acknowledgements}
It is an honor for us to dedicate this work to the memory of Jim Humphreys. His influence on the subject
both through his published work and through his generosity and enthusiasm in sharing his ideas can not
be overestimated.

We thank Jie Du, Do Kien Hoang, David Kazhdan, Ivan Losev, Victor Ostrik, Oron Propp, Yannan Qiu, Yakov Varshavsky, and Nanhua Xi for useful discussions.
We are also much indebted to the anonymous referee for pointing out a mistake in the original version of the paper.
R.B. was partly supported by the NSF and the Simons Foundation
sabbatical fellowship, S.D. was partly supported by NSERC. A.B. was partly supported by NSERC, and D.K. thanks ERC grant 669655.

\section{The main results}
\subsection{Set up and notations}
In this section we quickly recall the notation required to discuss our results in terms of the asymptotic
Hecke algebra.
\subsection{The affine Hecke algebra}
Let $W$ be as above and let $S\subset W$ be the set of simple reflections. Let $\ell$ be the length function
on $W$. Let $H$ 
be the affine Hecke algebra over $\Z[v,v^{-1}]$ attached to $W$. As a $\Z[v,v^{-1}]$-module, $H$ is free with standard basis $\{T_w\}_{w\in W}$. Multiplication is determined by the relations $T_wT_{w'}=T_{ww'}$ when $\ell(ww')=\ell(w)+\ell(w')$ and the quadratic relation $(T_s-v^2)(T_s+1)=0$ for $s\in S$. We write
$C_w$ for the Kazhdan-Lusztig basis element defined by
\[
C_w=\sum_{y\leq w}(-1)^{\ell(w)-\ell(y)}v^{\ell(w)}v^{-2\ell(y)}P_{y,w}(v^{-2})T_y,
\]
where the $P_{y,w}$ are the Kazhdan-Lusztig polynomials and $\leq$ is the strong Bruhat order. We write 
$h_{x,y,z}\in\Z[v,v^{-1}]$ for the structure constants of $H$ with respect to the $C_w$-basis.
\subsubsection{Cells, nilpotents, and the $a$-function}
\label{subsection cells nilpotents and the a-function}
Given an element $w$ in $W$, define the \emph{left descent set}
of $w$ by $\mathcal{L}(w)=\sets{s}{sx<x}\subset S$, and likewise the \emph{right descent set} $\mathcal{R}(w)$.

Using the Kazhdan-Lusztig polynomials, Lusztig defined partitions of $W$ into left, right, and two-sided cells.
For $w,x\in W$, write $x\prec w$ if $x< w$, $\ell(w)-\ell(x)$ is odd, and $P_{x,w}(v^2)$ has 
degree $\ell(w)-\ell(x)-1$. Write $x-w$ if either $x\prec w$ or $w\prec x$, and say that $x\leq_L w$ if there exists a sequence $x=x_0, x_1,x_2,\dots, x_n=w$ of elements of $W$ such that for each $i$, $1\leq i\leq n$, we have
$x_{i-1}-x_{i}$ and $\mathcal{L}(x_{i-1})\not\subset\mathcal{L}(x_{i})$. Say that $x\leq_{LR}w$ if there exists a sequence
$x=x_0, x_1, \dots, x_n=w$ of elements of $W$ such that for each $i$, $1\leq i\leq n$, either $x_{i-1} \leq_L  x_i$ or  $x_{i-1} ^{-1}\leq_L  x_i^{-1}$.
The \emph{left cells} of $W$ are the equivalence
classes under the equivalence relation associated to $\leq_L$; the two-sided cells are likewise defined from $\leq_{LR}$. The \emph{right cells} are defined to be of the form $\Gamma^{-1}$ where $\Gamma$ is a left cell. 
There are finitely-many cells of each type. Each two-sided cell is a union of one-sided cells.
Lusztig proved in \cite{cells4} that the two-sided cells of $W$ are in bijection with nilpotent
conjugacy classes in $\Lg$. 

Lusztig defined a function $a\colon W\to\N$ by setting $a(z)$ to be the smallest natural number such that $(-v)^{a(z)}h_{x,y,z}$ is a polynomial in $v$
for any $x,y\in W$. The $a$-function is constant on two-sided cells. As $W$ is crystallographic, $a(z)$ is always finite. In fact, if $e$ is the nilpotent corresponding to the cell
$c$ under Lusztig's bijection, then $a(c)=\dim_\C\Bb_e$, where $\Bb$ is the flag variety of $\Lg$ and
$\Bb_e$ is the Springer fiber corresponding to $e$.

Define $\gamma_{x,y,z}\in\Z$ to the constant term of $v^{a(z)}h_{x,y,z^{-1}}\in \Z[v]$.

Let $\delta(w)$ be the degree of $P_{1,w}(v)$, and define $\mathcal{D}=\sets{d\in W}{a(d)=l(d)-2\delta(d)}\subset W$.
The elements of $\mathcal{D}$ all obey $d^2=1$ and are called the \emph{distinguished involutions} of $W$.
There are finitely-many distinguished involutions in $W$. Each one-sided cell contains exactly one distinguished involution.
\subsection{The asymptotic Hecke algebra}
Let $J$ be the free abelian group with basis $\{t_w\}_{w\in W}$, equipped further with a ring structure where 
the product is given by
\[
t_xt_y=\sum_{z}\gamma_{x,y,z}t_{z^{-1}}.
\]
This multiplication is associative, and has identity element $1_J=\sum_{d\in\mathcal{D}}t_d$. 
If $c$ is 
a two-sided cell of $W$, denote 
\[
J_{c}=\spn{\sets{t_w}{w\in c}}=\left(\sum_{d\in\mathcal{D}\cap c}t_d\right)J\left(\sum_{d\in\mathcal{D}\cap c}t_d\right).
\]
The elements $t_d$ for $d\in\mathcal{D}$ are orthogonal idempotents, and
$J=\bigoplus_{c}J_c$ is a direct sum of two-sided ideals. The unit element in $J_{c}$
is $\sum_{d\in\mathcal{D}\cap c}t_d$.

If $\Gamma\subset c\subset W$ 
is a left cell, and $d_\Gamma$ is the unique distinguished involution in $\Gamma$, then
\[
J_{\Gamma\cap\Gamma^{-1}}=t_{d_\Gamma}Jt_{d_\Gamma}\subset J_c
\]
is a subring, with identity element $t_{d_\Gamma}$. All of the rings $J$, $J_c$, 
and $J_{\Gamma\cap\Gamma^{-1}}$ are based rings.
If $\Gamma$ and $\Gamma'$ are two left cells,
then $t_{d_\Gamma}Jt_{d_{\Gamma'}}=J_{\Gamma\cap(\Gamma')^{-1}}$ is a based 
$t_{d_\Gamma}Jt_{d_\Gamma}- t_{d_{\Gamma'}}Jt_{d_{\Gamma'}}$-bimodule.

From now on, we extend scalars to $\C$ and write $J$ for the asymptotic Hecke algebra with 
complex coefficients. We do the same for $J_c$ and $t_dJt_{d'}$.

\subsection{Equivariant sheaves and centrally extended points}
\label{section equivariant sheaves and centrally extended points}
We now recall from \cite{BO} the following 
\begin{dfn}
Let $Y$ be a finite set and $F$ be a reductive group. The structure of an $F$\emph{-set of centrally extended points} on $Y$ is the data of
\begin{enumerate}
\item[(a)] 
An $F$-action on $Y$;
\item[(b)]
For every $y\in Y$, a central extension
\begin{center}
 \begin{tikzcd}
 1\arrow[r]&\Gm\arrow[r]&\widetilde{\stab{F}{y}}\arrow[r]&\stab{F}{y}\arrow[r]&1,
 \end{tikzcd}
 \end{center} 
equivariant under the action of $F$ in the sense that for all $g\in F$ we are provided an isomorphism
$i_y^g$ such that
\begin{center}
 \begin{tikzcd}
 1\arrow[r]&\Gm\arrow[r]\arrow[d, "\id"]&\widetilde{\stab{F}{y}}\arrow[d, "i_y^g"]\arrow[r]&\stab{F}{y}\arrow[r]\arrow[d, "C_g"]&1\\
  1\arrow[r]&\Gm\arrow[r]&\widetilde{\stab{F}{gy}}\arrow[r]&\stab{F}{gy}\arrow[r]&1
 \end{tikzcd}
 \end{center} 
commutes, where $C_g$ is conjugation by $g$. We further require that $i^{g'g}_y=i^{g'}_{gy}\circ i^g_{y}$, 
and that if $g\in\stab{F}{y}$, we require that $i_y^g=C_g$.
\end{enumerate}
\end{dfn}

\begin{dfn}
Let $Y$ be a centrally-extended $F$-set. An $F$-\emph{equivariant sheaf on} $Y$ is the data of 
\begin{enumerate}
\item[(a)] 
A sheaf $\F$ of finite-dimensional $\C$-vector spaces on $Y$ with a projective $F$-equivariant structure;
\item[(b)]
For all $y\in Y$, an action of the central extension $\widetilde{\stab{F}{y}}$ on $\F_y$ such that
$\Gm$ acts by the identity character.
\end{enumerate}
\end{dfn}
If $Y$
has the structure of a centrally-extended $F$-set, then its square $Y\times Y$ has a canonical structure
of an $F$-extended set also, by defining $\widetilde{\stab{F}{(y,y')}}$ to be the product 
central extension $\tilde{F_y}\times_F\tilde{F}_{y'}/\Gm$, with $\Gm$ embedded antidiagonally and 
$\tilde{F_y}$ the restriction to $\stab{F}{(y,y')}$ of the central extension of $\stab{F}{y}$,
and likewise for $\tilde{F}_{y'}$.

If the central extension of $\stab{F}{y}$ is trivial, then the 
$\widetilde{\stab{F}{y}}$-action on $\F_y$ is just the data of a $\stab{F}{y}$-action.

\subsection{Criteria for existence of nontrivial central extensions}

We now present two criteria for appearance of nontrivial central extensions.

In the next two statements we fix a nilpotent $e \in \Lg$ and let $c_e$ be the corresponding cell in $W$ and $J_e$ 
the corresponding summand in $J$. Let $Z_e$ be the  centralizer of $e$ and $Z_e^{red}$ be a maximal reductive subgroup in $Z_e$.

According to \cite[Theorem 4]{BO}, the based ring $J_e$ is isomorphic to $K^{Z_e^{red}}(Y_e\times Y_e)$ for a certain centrally extended
$Z_e^{red}$-set $Y_e$. 

Furthermore, the set $Y_e$ can be made more explicit based on a result of \cite{BL}. Recall that the {\em noncommutative Springer resolution} $A$
\cite{BeICM}, \cite{BeMi} is a certain noncommutative ring equipped with a derived equivalence 
\begin{equation}\label{der_eq}
D^b(A-mod)\cong D^b(Coh(\Nt)),
\end{equation}
 where $A-mod$ 
stands for the category of finitely generated modules and $\Nt=T^*(\LG/\LB)$ is the Springer resolution. The ring $\O(\N)$ of regular functions
on the nilpotent cone $\N\subset \Lg$ is identified with the center of $A$; for $e\in \N$ let $A_e=A/{\fm}_e A$ denote the specialization of $A$ at the corresponding
maximal ideal $\fm_e\subset \O(\N)$. 

Then 
 \cite[Proposition 8.25]{BL} provides an isomorphism of based rings  
$$J_e\cong K^{Z_e^{red}}(Y_e\times Y_e)$$ where $Y_e$ is the set of isomorphism
classes of irreducible $A_e$ modules, equipped with the natural action of $Z_e^{red}$: 
it is deduced from the isomorphism of based rings
\begin{equation}\label{bimod}
 J_e \cong K^{Z_e^{red}}(A_e-bimod^{ss}),
 \end{equation}
 where the right hand side is
  the $K$-group of the monoidal category of semisimple $A_e$-bimodules 
equivariant under $Z_e^{red}$.

\begin{lem}\label{lem1}
Suppose that the reductive part of the centralizer $Z_e^{red}$ of $e$ is connected modulo the center of $\LG$. 

a) Let $\epsilon$ be an indecomposable idempotent in 
$J_e$. Then $\epsilon J \epsilon\cong Rep(Z_e^{red})$.

b) Let $\epsilon_1$, $\epsilon_2$ be indecomposable idempotents in 
$J_e$. If $\epsilon_1 J \epsilon_2$ is not isomorphic to a regular
$\epsilon_1 J \epsilon_1-\epsilon_2 J \epsilon_2$ bimodule, then the corresponding
point in $Y_e^2$ is centrally extended. \qed
\end{lem}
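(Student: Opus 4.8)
The plan is to exploit the explicit realization $J_e \cong K^{Z_e^{red}}(Y_e \times Y_e)$ of \cite{BL}, together with the description \eqref{bimod} of $J_e$ as the equivariant $K$-group of semisimple $A_e$-bimodules, and to translate statements about idempotents and bimodules in $J_e$ into statements about points of $Y_e$ and equivariant sheaves on $Y_e \times Y_e$. First I would record the following dictionary. Under the isomorphism $J_e \cong K^{Z_e^{red}}(Y_e \times Y_e)$, the unit decomposes as a sum of idempotents indexed by orbits of $Z_e^{red}$ on $Y_e$, and since by hypothesis $Z_e^{red}$ is connected modulo the center of $\LG$ and (by the footnote convention) the center acts trivially, every orbit is a single point; thus indecomposable idempotents of $J_e$ correspond bijectively to points $y \in Y_e$, with $\epsilon_y$ the class of the structure sheaf of the diagonal point $(y,y)$ twisted by the trivial representation of $\widetilde{\stab{Z_e^{red}}{y}}$. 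For such an idempotent, $\epsilon_y J_e \epsilon_y \cong K^{Z_e^{red}}(\{(y,y)\})$, which is the $K$-group of $\widetilde{\stab{Z_e^{red}}{y}}$-equivariant sheaves on a point on which $\Gm$ acts by the identity character.

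For part (a), the key step is to show that the central extension $\widetilde{\stab{Z_e^{red}}{y}}$ is automatically trivial. This is where connectedness is used: $\stab{Z_e^{red}}{y}$ is the centralizer in a connected reductive group (modulo a central torus acting trivially) of a point, hence is itself a connected reductive group — more precisely, since $Y_e$ parametrizes isomorphism classes of irreducible $A_e$-modules and $Z_e^{red}$ permutes these, the stabilizer of an iso-class acts on a fixed irreducible module, and by Schur the resulting obstruction to linearizing lands in $H^2$ of a connected reductive group with coefficients in $\Gm$, which vanishes (a connected reductive group has no nontrivial central extension by $\Gm$ as an algebraic group, since $\mathrm{Pic}$ of the group is finite and $H^2(\cdot,\Gm)$-type obstructions split after passing to the simply connected cover and the torus). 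Concretely, the central extension splits, so a choice of splitting identifies $\widetilde{\stab{Z_e^{red}}{y}}$-equivariant sheaves on the point with $\stab{Z_e^{red}}{y}$-representations, giving $\epsilon_y J_e \epsilon_y \cong \Rep(\stab{Z_e^{red}}{y})$; and $\stab{Z_e^{red}}{y} = Z_e^{red}$ because the $Z_e^{red}$-action on the finite set $Y_e$ factors through $\pi_0$, which is trivial, so every point is fixed. I expect the honest verification that the central extension splits — i.e. that there are no nontrivial central extensions of a connected reductive group by $\Gm$ in the algebraic category, or equivalently the vanishing of the relevant projective-representation obstruction — to be the one genuinely content-bearing point, though it is standard.

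For part (b), I would argue contrapositively. If the point $(y_1,y_2) \in Y_e \times Y_e$ is \emph{not} centrally extended, then by the definition in Section \ref{section equivariant sheaves and centrally extended points} the induced central extension of $\stab{Z_e^{red}}{(y_1,y_2)} = \stab{Z_e^{red}}{y_1} \cap \stab{Z_e^{red}}{y_2}$ is trivial, so $\epsilon_1 J_e \epsilon_2 = K^{Z_e^{red}}(\{(y_1,y_2)\})$ is the $K$-group of ordinary equivariant sheaves on that point, i.e. $\Rep(\stab{Z_e^{red}}{(y_1,y_2)})$. Since (as in (a)) both stabilizers equal $Z_e^{red}$, this is $\Rep(Z_e^{red})$, which is exactly $\epsilon_1 J_e \epsilon_1 = \epsilon_2 J_e \epsilon_2$ acting on itself — the regular bimodule. (Here one uses that the bimodule structure on the $K$-group of the point $(y_1,y_2)$ coming from convolution with the diagonal points $(y_1,y_1)$ and $(y_2,y_2)$ is precisely multiplication in $\Rep(Z_e^{red})$, which follows from the monoidal description \eqref{bimod}: tensoring equivariant bimodules supported at single iso-classes.) Thus non-triviality of the bimodule forces the point to be centrally extended. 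The main subtlety here is bookkeeping the bimodule structure across the isomorphism \eqref{bimod} — making sure that "regular bimodule" on the $J_e$ side matches "trivially-twisted skyscraper at $(y_1,y_2)$ with its natural $\Rep$-bimodule structure" on the geometric side — but this is essentially formal given the monoidal equivalence already cited from \cite{BL}.
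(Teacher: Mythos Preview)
Your argument for part (a) contains a genuine error. You claim that the central extension $\widetilde{\stab{Z_e^{red}}{y}}$ of the stabilizer of a single point $y\in Y_e$ must be trivial because ``a connected reductive group has no nontrivial central extension by $\Gm$ as an algebraic group.'' This is false: $\GL(2)\to\PGL(2)$ is a nontrivial central extension of the connected reductive group $\PGL(2)$ by $\Gm$ (the derived subgroups $\SL(2)$ and $\PGL(2)$ are non-isomorphic, so the extension does not split). More to the point, the very example the paper constructs has $Z_e^{red}$ with neutral component $\PGL(2)$ and exhibits points $y\in Y_e$ with genuinely nontrivial central extension --- so your claim, if correct, would contradict the main result of the paper.

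The correct reason (a) holds is different and does not require the extension at $y$ to be trivial. The idempotent $\epsilon_y$ cuts out equivariant sheaves on the \emph{diagonal} point $(y,y)\in Y_e\times Y_e$, and by the definition recalled in Section~\ref{section equivariant sheaves and centrally extended points} the central extension attached to $(y,y)$ is the product extension $\tilde{F}_y\times_{F_y}\tilde{F}_y/\Gm^{\mathrm{anti}}$, whose cocycle class is the difference of the class at $y$ with itself and hence is trivial regardless of the extension at $y$. Thus $\epsilon_y J_e\epsilon_y$ is the $K$-group of honest $\stab{Z_e^{red}}{y}$-equivariant sheaves on a point; combined with $\stab{Z_e^{red}}{y}=Z_e^{red}$ (which you correctly deduce from connectedness modulo center plus triviality of the central action), this gives $\Rep(Z_e^{red})$. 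This is what the paper means by ``clear from the definition of equivariant sheaves on the square of a centrally extended set.''

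Your argument for (b) is essentially correct and matches the paper's reasoning.
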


\proof It is easy to see that the center of $\LG$ acts trivially on the set $Y_e$:
in view of the above interpretation of $Y_e$ as the set of irreducible $A_e$-modules,
this follows from the fact that the action of $\LG$ on $A_e$ factors through the adjoint group. The statement is now clear from the
definition of equivariant sheaves on the square of a centrally extended set. \qed

Let  $\BB_e$ denote the corresponding Springer fiber, i.e. the preimage of $e$ under the Springer map $\Nt\to \N$. 

\begin{lem}\label{lem2}
Suppose the reductive part $Z_e^{red}$ of the centralizer is connected modulo the center of $\LG$. If the central extensions are trivial then the restriction of equivariance map
$K^{Z_e^{red}}(\BB_e)=K^{Z_e^{red}}(A_e-mod)\to K(\BB_e)$ is surjective.
\end{lem}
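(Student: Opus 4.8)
The plan is to exploit the two descriptions of $K^{Z_e^{red}}(\BB_e)$ recorded just before the statement: on one hand it equals $K^{Z_e^{red}}(A_e-mod)$ via the equivalence \eqref{der_eq} specialized at $e$, and on the other hand the based-ring isomorphism \eqref{bimod} identifies $J_e$ with the $K$-group of $Z_e^{red}$-equivariant semisimple $A_e$-bimodules, whose simple objects are indexed by $Y_e\times Y_e$. Under the hypothesis that $Z_e^{red}$ is connected modulo the center of $\LG$, Lemma \ref{lem1}(a) says each indecomposable idempotent $\epsilon$ in $J_e$ gives $\epsilon J_e\epsilon\cong Rep(Z_e^{red})$; concretely, writing $Y_e=\{y_1,\dots,y_n\}$ for the set of irreducible $A_e$-modules, the classes $[L_{y_i}]\in K^{Z_e^{red}}(A_e-mod)$ are permuted trivially by $Z_e^{red}$ (the center of $\LG$ acts trivially by the proof of Lemma \ref{lem1}), so $K^{Z_e^{red}}(A_e-mod)$ is a free $Rep(Z_e^{red})$-module on the $[L_{y_i}]$ \emph{provided} the central extensions attached to the diagonal points $(y_i,y_i)$ are trivial — which is exactly the hypothesis "the central extensions are trivial."

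First I would make precise what "central extensions are trivial" buys us at the diagonal points: for each $y_i$ the simple bimodule supported at $(y_i,y_i)$ is $\mathrm{End}_\C(L_{y_i})$ with its natural $\widetilde{\mathrm{Stab}}$-action, and triviality of the extension means this action descends to $\mathrm{Stab}_{Z_e^{red}}(y_i)=Z_e^{red}$, i.e. the $Z_e^{red}$-action on $L_{y_i}$ lifts (projectively becomes genuinely) to an honest action. Hence each $[L_{y_i}]$ is a well-defined class in $K^{Z_e^{red}}(A_e-mod)$ and $\{[L_{y_i}]\}$ is a $Rep(Z_e^{red})$-basis of this $K$-group — this is essentially the content of Lemma \ref{lem1}(a) read along the diagonal, together with the semisimplicity of $A_e$ (so that every object is a sum of the $L_{y_i}$). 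Next I would observe that the non-equivariant $K(\BB_e)=K(A_e-mod)$ is free abelian on the \emph{same} classes $[L_{y_i}]$ (forgetting the group action), since the $L_{y_i}$ are precisely the simple $A_e$-modules and $A_e$ is a finite-dimensional semisimple-mod-radical… more precisely one uses that $K(A_e-mod)$ has rank equal to the number of simples, $=|Y_e|$. The restriction/forgetful map $K^{Z_e^{red}}(\BB_e)\to K(\BB_e)$ then sends $[L_{y_i}]\mapsto[L_{y_i}]$, sends the coefficient ring $Rep(Z_e^{red})\to\Z$ via $\dim$, and therefore hits every $[L_{y_i}]$; since those generate $K(\BB_e)$, the map is surjective.

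The main obstacle, and the place where the hypotheses genuinely enter, is the very first reduction: without connectedness of $Z_e^{red}$ modulo the center one cannot conclude $\mathrm{Stab}_{Z_e^{red}}(y_i)=Z_e^{red}$ (the component group could permute the $y_i$ nontrivially or act on an individual $L_{y_i}$ through a nontrivial quotient), and without triviality of the central extension the class "$[L_{y_i}]$" simply does not live in $K^{Z_e^{red}}$ — only $[\mathrm{End}(L_{y_i})]$ does in the bimodule picture, which is a \emph{square} and carries the $\Gm$ ambiguity. So the argument is really: trivial extensions $\Rightarrow$ each simple $A_e$-module is genuinely $Z_e^{red}$-equivariant $\Rightarrow$ the forgetful map is split surjective on the distinguished basis. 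I would present it in that order, flagging that the converse failure of surjectivity in the $\tilde B_3$ example is precisely what sections \ref{compB3} and \ref{section the springer fiber} will exploit.
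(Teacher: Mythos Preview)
Your overall plan matches the paper's endgame: once you know every simple $A_e$-module $L_y$ carries a genuine $Z_e^{red}$-action, the classes $[L_y]$ lie in the image of the forgetful map and span $K(A_e\text{-}mod)\cong K(\BB_e)$, so surjectivity follows. The problem is your route to that first step via the diagonal of $Y_e\times Y_e$.

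The product central extension at a diagonal point $(y,y)$ is \emph{always} trivial: the simple bimodule there is $L_y\otimes L_y^*$, on which the projective cocycle from $L_y$ cancels against its inverse from $L_y^*$, so the $Z_e^{red}$-action on $\mathrm{End}_\C(L_y)$ is honest regardless of whether the action on $L_y$ itself is. Your ``i.e.'' is therefore unjustified --- the diagonal gives no information. The fix is to drop the bimodule detour entirely: by the very construction of the centrally extended structure on $Y_e$ in \cite{BL}, the extension at $y$ \emph{is} the obstruction to making the projective action on $L_y$ genuine, so the hypothesis yields equivariance tautologically.

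The paper reaches the same conclusion by a different and more robust path: it first singles out one simple $L$ that is always genuinely equivariant (its projective cover corresponds under \eqref{der_eq} to the structure sheaf, which is visibly equivariant), and then argues via an \emph{off}-diagonal point $(L',L)$: if $L'$ were only projectively equivariant, the bimodule $L'\otimes L^{op}$ would be genuinely projective, witnessing a nontrivial extension in $Y_e^2$. This buys the extra fact that nontrivial extensions on $Y_e$ always show up in $Y_e^2$ (hence in $J_e$), which is exactly what the application in Section~\ref{section the springer fiber} needs. (Also: $A_e$ is not semisimple in general; your own hedge --- that $K(A_e\text{-}mod)$ is free of rank $|Y_e|$ on the classes of simples --- is the correct statement and suffices.)
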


\proof
We again use \eqref{bimod}.
Since $Z_e^{red}$ is assumed to be connected modulo the center of $\LG$, every irreducible  $A_e$-module admits a compatible projective action (i.e. an action of a
central extension) of $Z_e^{red}$. Clearly if there exist two irreducible modules $L$, $L'$ of $A_e$, so that  $L$ is $Z_e^{red}$-equivariant
and $L'$ is not (thus, $L'$ is a genuinely projective representation of $Z_e^{red}$) then the bimodule $L'\otimes L^{op}$ is also a genuinely projective
representation of $Z_e^{red}$, thus the corresponding point in $Y_e^2$ is centrally extended.

It is easy to see that the only irreducible module $L$ such that the corresponding object $\F_L$ in $D^b(Coh_{\BB_e}(\gt))$ satisfies $R\Gamma(\F_L)\ne 0$
is $Z_e^{red}$ equivariant. This follows from the fact that its projective cover corresponds to the structure sheaf of the formal neighborhood of $\BB_e$
under the equivalence \eqref{der_eq}, while the structure sheaf is clearly equivariant.

It remains to show that if all irreducible $A_e$-modules are $Z_e^{red}$-equivariant then the map $K^{Z_e^{red}}(\BB_e)\to K(\BB_e)$ is surjective. The derived equivalence \eqref{der_eq}
yields compatible isomorphisms $K(Coh(\BB_e))\cong K(A_e-mod)$, $K(Coh^{Z_e^{red}}(\BB_e))\cong K^{Z_e^{red}}(A_e-mod)$, see \cite{BeMi}. 
Thus we are reduced to showing that the map $K^{Z_e^{red}}(A_e-mod)\to K(A_e-mod)$
is not surjective.

Suppose that an irreducible $A_e$-module $L$ carries a  genuinely projective $Z_e^{red}$-action and let $n>1$ be the order of
the corresponding class in $H^2(Z_e^{red},\Ce^*)$. Then for a virtual representation $\sum n_i [L_i]$ in the image of the map $K^{Z_e^{red}}(A_e-mod)\to K(A_e-mod)$
the coefficient at $[L]$ is divisible by $n$, so the map is not surjective.
\qed
\subsection{Cocenters}

Let $C=J/[J,J]$, $C_e=J_e/[J_e,J_e]$ be the cocenter (zeroth Hochschild homology) of $J$, $J_e$ respectively, thus $C=\bigoplus C_e$ where $e$ runs over the set
of conjugacy classes of nilpotents in $\Lg$.
As before, $H_q$ denotes the specialization of the affine Hecke algebra at $q\in \C^\times$ and $\phi_q:H_q\to J$
is the specialization of Lusztig's homomorphism $\phi$ defined in \cite{cells2}.

\begin{theorem}\label{cocenter_Thm}
If $q$ is not a root of unity then the homomorphism $\phi_q:H_q\to J$ induces an isomorphism $\bar{\phi_q}:H/[H,H]\to C$.
\end{theorem}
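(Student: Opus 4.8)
The plan is to establish the isomorphism by treating surjectivity and injectivity separately, as the introduction indicates. For surjectivity, I would work basis-element by basis-element: the cocenter $C = J/[J,J]$ is spanned by the classes $\bar t_w$, and the relations $t_xt_y \equiv t_yt_x$ coming from $[J,J]$ already identify many of these. The key observation is that Lusztig's homomorphism $\phi_q$ sends $C_w$ to $\sum_{z} \hat h_{w,?,z}\, t_z$-type expressions, and more usefully, the leading-order behavior of $\phi_q$ is controlled by the $a$-function: for $w$ in a two-sided cell $c$ with $a$-value $a(c)$, the ``top'' part of $\phi_q(C_w)$ involves $\sum_{d \in \mathcal D \cap c}$ and the $\gamma_{x,y,z}$ constants. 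Concretely, I would show that for each $w \in W$, the element $t_w$ (mod commutators) lies in the image of $\bar\phi_q$ by downward induction on the $a$-function (or on the cell preorder $\leq_{LR}$): one shows $\phi_q(C_w) \equiv (\text{unit coefficient}) \cdot t_{w} + (\text{terms in strictly higher cells, i.e. smaller Springer fibers, or lower-length terms in the same cell})$, modulo $[J,J]$. Since within a fixed cell the structure is governed by a based ring of the form built from $K^{Z_e}(Y_e \times Y_e)$ (possibly centrally extended), whose cocenter is visibly spanned by ``diagonal'' classes, a dimension/rank bookkeeping in each cell closes the induction. This is the ``direct computational argument'' promised in the text.

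For injectivity, the excerpt explicitly defers to the Appendix and the density of characters of standard representations, so I would not reprove it from scratch; instead I would cite that argument. The structural input is: a linear functional on $C$ that vanishes on the image $\bar\phi_q(H/[H,H])$ would, after transport through $\phi_q$, give a trace functional on $H_q$ vanishing on all $C_w$ up to commutators, hence a virtual-character-like functional annihilating enough standard modules; density (valid precisely when $q$ is not a root of unity, which is where this hypothesis enters) forces it to vanish. Dually, this gives that $\bar\phi_q$ has dense — hence, by finite-rank-in-each-cell considerations, full — image and trivial kernel. I would phrase the injectivity step as: $\dim_{\C} H/[H,H] \le \dim_{\C} C$ via the density statement, matched against the surjectivity bound $\dim_{\C} C \le \dim_{\C} H/[H,H]$, to conclude an isomorphism.

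The main obstacle I anticipate is the surjectivity step, specifically controlling $\phi_q(C_w)$ modulo $[J,J]$ with enough precision. The difficulty is that $\phi_q$ is not ``upper-triangular'' in an obvious basis: $\phi_q(C_w) = \sum_{d,z} h_{w,d,z}\, t_z$ where $d$ ranges over distinguished involutions, and one must argue that the lowest-order ($v \to 0$, i.e. $q^{1/2}$-leading) contribution recovers $t_w$ up to commutators and higher-cell noise. This requires knowing that the pairing between $w$ and the distinguished involutions in its own cell is nondegenerate at leading order — essentially a statement that the based ring $J_c$ has a cocenter of the expected rank and that $\phi_q$ surjects onto it. For the centrally extended cells (like the $\tilde B_3$ example of the paper), one must check that the central extension does not obstruct surjectivity onto the cocenter: the cocenter of $K^{\tilde Z_e}(Y_e \times Y_e)$ with a central extension is still spanned by the ``twisted diagonal'' classes, and these are still hit. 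I would isolate this as a lemma: \emph{for each two-sided cell $c$, $\bar\phi_q$ restricts to a surjection $H_{q,c}/[\,\cdot\,] \onto C_c$}, proved by the leading-term analysis together with the based-ring description of $J_c$, and then assemble the cells. The bookkeeping of which terms land in strictly smaller cells (where induction applies) versus the same cell is where the care is needed, but it is ``routine'' in the sense of following Lusztig's standard estimates for $h_{x,y,z}$ in terms of the $a$-function.
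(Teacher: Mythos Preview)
Your surjectivity plan is substantially harder than what the paper does, and the obstacle you flag is real. The paper does not attempt any basis-element analysis of $\phi_q(C_w)$ modulo commutators. Instead it proves a short ring-theoretic lemma: if $I$ is a left ideal in an associative unital ring $A$ which annihilates no simple $A$-module, then $I$ surjects onto $HH_0(A)$. It then shows, cell by cell, that the image $\phi_c(H_c)\subset J_c$ is a left ideal; this follows from the identity $t_{x_1}\phi_c(C_{x_2})=\phi_c(\psi^{-1}(t_{x_1}t_{x_2}))$, obtained from Lusztig's formula 2.4(d) in \cite{cells2} together with the anti-involution $w\mapsto w^{-1}$. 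That $\phi_c(H_c)$ kills no simple $J_c$-module is \cite[Proposition 4.4]{cells4}. No leading-term computation, no induction on $a$, and no appeal to the centrally-extended description of $J_c$ is needed. Your proposed upper-triangularity of $\phi_q(C_w)$ modulo $[J,J]$ is not something one can read off from Lusztig's estimates on $h_{x,y,z}$; the ``routine'' step you allude to is exactly the missing idea.

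Your injectivity discussion has one genuine error: the dimension-count framing cannot work, because both $H_q/[H_q,H_q]$ and $C$ are infinite-dimensional over $\C$ (already the summand for $e=0$ has cocenter $\Rep(\LG)$). There is no ``finite-rank-in-each-cell'' salvage. The appendix argument proves injectivity directly: if $\phi_q(h)\in[J,J]$ then $\mathrm{Tr}(\rho(h))=0$ for every standard module $R_{s,u,\psi}$, since each such module is (in the Grothendieck group) the restriction via $\phi_q$ of a $J$-module; density of standard characters for $q$ not a root of unity then forces $h\in[H_q,H_q]$. You should drop the dimension comparison entirely and invoke that argument as stated.
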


Surjectivity of the map $\bar{\phi_q}$ is checked in section \ref{sur_coc}. The proof of injectivity appears in the Appendix.


 Theorem \ref{cocenter_Thm} provides a close link between algebra $J$ and 
 harmonic analysis on the $p$-adic group
$G_F$ where $F$ is a non-Archimedian local field. 
 Namely, let $S(G_F)$ be the space of locally constant complex valued functions with compact support on $G_F$ and let $S_0(G_F)$ be
 the $G_F^2$ submodule generated by the space $S(I\backslash G_F/ I)$ of functions bi-invariant under the Iwahori subgroup.
  It is well known that $S(I \backslash G_F/I)\cong H_q$.
 Thus we 
 get a map from the cocenter  $C(H_q)=H_q/[H_q,H_q]$ of $H_q$ to the space of coinvariants of the conjugation 
 action $S_0(G_F)_{G_F}$ which is well known to be an isomorphism;
  also    $S_0(G_F)$
 splits off canonically as a direct summand in $S(G_F)$. 
 Thus the linear dual $C^*$ of  $C$ is realized as a direct summand in the space of invariant distributions on $G_F$. 
 
 Notice that Theorem \ref{cocenter_Thm} implies that this summand further splits as a direct sum $\oplus _e C(J_e)$ indexed
 by unipotent conjugacy classes in $\LG$; these appear in harmonic analysis on the $p$-adic group as a part
 of the Langands parameter. A more precise conjectural  description of that space in terms of $\LG$ is presented in the Appendix.

\section{Computations in $J$ for type $\tilde{B}_3$}\label{compB3}
\subsection{The two-sided cell $D$}
From now on we specialize $W=W(\tilde{B}_3)$ to be the affine Weyl group of type $\tilde{B}_3$.
The cell structure of $W$ is completely understood thanks to work of J. Du, and we shall refer to \cite{DuCell}
and the exposition in the survey \cite{ShiSurv}. We shall adopt the union of the notation
used in these references. 

We now fix a presentation of $W$ that we shall use throughout.  Fix
\[
W=\genrel{s_0,s_1,s_2,s_3}{s_i^2=(s_1s_2)^3=(s_0s_2)^3=(s_2s_3)^4=1,~(s_is_j)^2=1~\text{otherwise}},
\]
corresponding to the labelling
\begin{center}
\dynkin[labels={0,1,2,3}] B[1]{3}
\end{center}
of the affine Dynkin diagram. Here $s_0$ is the affine simple reflection. We write $W_f$
for the finite Weyl group of type $B_3$.

We recall now some results of \cite{DuCell} as exposited in \cite{ShiSurv}.
There are eight two-sided cells of $W$, written
\[
A=C(9),~B=C(6),~C=C(4),~D=C(3),~E=C(2)_2,~F=C(2)_1,~ G=C(1),~H=C(0)=\{1\},
 \]
where the value of Lusztig's $a$-function on $C(n)$ is $n$.

Under the bijection recalled in section \ref{subsection cells nilpotents and the a-function} (first proven in this case in \cite{DuCell}) the nilpotent $e\in\ssp_6(\C)$ having three equal Jordan blocks corresponds to the two-sided cell $D$, and we have $Z_e^{red}\cong\SO(3)\times\{\pm 1\}$.

We shall require combinatorial descriptions of the sets $C'(n)$ and $C(n)$ from \cite{DuCell} for our calculations, and we recall them now. Let $S(i)=\sets{I\subset S}{\ell(w_I)=i}$, $S(2)_1=\{\{s_0,s_1\}\}$ and $S(2)_2=S(2)\setminus S(2)_1$.
Here $w_I$ is longest element in the subgroup generated by $I$.
Then the sets $C'(i)$ for $i\neq 2$ are defined to be the sets of all $w\in W$ such that
\begin{enumerate}
\item[(a)]
$w=xw_Iz$ for some $x,z\in W$, and $I\in S(i)$ with $\ell(w)=\ell(x)+\ell(w_I)+\ell(z)$, and
\item[(b)]
$w\neq xw_Jz$ for any $x,z\in W$ and $J\not\subset\bigcup_{j=0}^{i}S(j)$ 
with $\ell(xw_Jz)=\ell(x)+\ell(w_J)+\ell(z)$. 
\end{enumerate}
The condition (b) ensures that one does not double-count elements that will satisfy 
criterion (a) for two-sided cells with higher $a$-values. If $i=2$, then the conditions defining $C'(2)_j$ are the same, except only $S(2)_j$ appears
in (b).

Then one puts $C(n)=C'(n)$ for $n=1,6,9$, $C(2)_j=C'(2)_j$, 
$C(3)=C'(3)\setminus C''(3)$, and $C(4)=C'(4)\cup C''(3)$, where
\[
C''(3)=\sets{xyz}{\ell(xyz)=\ell(x)+\ell(y)+\ell(z),~ y\in\{s_1s_2s_1s_3s_2s_1, s_0s_2s_0s_3s_2s_0\}}.
\]
It will be relevant to note explicitly that
\[
S(3)=\sets{I\subset S}{\ell(w_I)=3}=\{\{s_1,s_2\},\{s_0,s_2\},\{s_0,s_1,s_3\}\}.
\]
We have
\[
w_{\{0,1,3\}}=s_0s_1s_3,~~~w_{\{1,2\}}=s_1s_2s_1,~~~w_{\{0,2\}}=s_0s_2s_0.
\]
Moreover, we have
\[
S(4)=\{\{s_2,s_3\}\},~~S(6)=\{\{s_0,s_1,s_2\}\},~~S(9)=\{\{s_1,s_2,s_3\},\{s_0,s_2,s_3\}\}.
\]
This gives $3+1+1+2$ longest words, and accounting for four singleton subsets and the three
subsets of the commuting generators $s_0,s_1,s_3$ giving longest words of length $2$, we see that
no others subsets $S(n)$ can occur. We have
\[
w_{\{1,2,3\}}=s_3s_2s_3s_1s_2s_3s_1s_2s_1.
\]
As conditions (a) and (b) are simple to check via computer, this description affords an easy way to determine 
whether an element of $W$ belongs to $D$. We shall omit these checks, which we carried out in Sage 
\cite{sagemath}, from now on.
\subsection{One-sided cells in $D$}
Du showed that $D$ contains twelve right (and hence twelve left) cells. Thus $D$ contains twelve 
distinguished involutions, which we list in the table following Theorem \ref{partition theorem} below.

We shall write $\Gamma_{X}$ for the right cell in $D$ the elements of which all have
left descent set equal to $X$. Right cells in $D$ are totally determined by 
their left descent sets, except when this set is equal to $\{2\}$; following \cite{DuCell}, 
we shall distinguish these three right cells via the decorations $\{2\}$, $\{2'\}$ and $\{\hat{2}'\}$. The same 
is then of course
true for left cells in $D$, and we shall write the left cell with right descent set $X$ as $\Gamma_X^{-1}$.
\subsubsection{Right spectra and right primitive pairs}
Given an element $w\in W$, define the \emph{right spectrum} $\Spr(w)$ of $w$ to be the set
\[
\sets{y\in W}{\text{there is a sequence}~y=x_0,x_1,\dots, x_n=w~\text{s.t.}~ x_i=x_{i-1}s_{j(i)}~\text{and}~\mathcal{R}(x_{i-1})\not\subset\mathcal{R}(x_i)\not\subset\mathcal{R}(x_{i-1})}.
\]
Define $\Spr(w)\leq\Spr(w')$ as explained in Section 3.4. of \cite{DuCell}. If $\Spr(w)$
is equivalent to $\Spr(w')$ under the corresponding equivalence relation, we say that $w$ 
and $w'$ are an $r$-\emph{primitive pair}. Elements of these equivalence classes belong 
to the same right cell.
\subsubsection{Graph of the $r$-primitive pair}
In \cite{DuCell}, Du computed several graphs elucidating the cell structure for $W$. The graph relevant to 
$D$ is
\begin{figure}[H]
\centering
\resizebox{0.2\textwidth}{!}{
\begin{tikzpicture}[ node distance={25mm}, main/.style = {draw, circle}] 
\node[main] (1) {$\{0,1,3\}$}; 
\node[main] (2) [below of=1] {$\{2\}$};
\node[main] (3) [below of=2] {$\{3\}$};
\node       (4) [below of=3] {};
\node[main] (5) [below of=3] {$\{0,1\}$};
\node[main] (6) [below right of=2] {$\{1,2\}$};
\node[main] (7) [below left of=2] {$\{0,2\}$};
\node[main] (8) [below of=6] {$\{1,3\}$};
\node[main] (9) [below of=7] {$\{0,3\}$};
\node[main] (10) [below of=8] {$\{\hat{2}'\}$};
\node[main] (11) [below of=9] {$\{2'\}$};
\node[main] (12) [below of=10] {$\{0\}$};
\node[main] (13) [below of=11] {$\{1\}$};
\draw (1)-- node[anchor=west] {2} (2) ;
\draw (2)--node[anchor=west] {3} (3);
\draw (6) -- node[anchor=west] {3}(8);
\draw (8) --node[anchor=west] {2}(10);
\draw (10)--node[anchor=west] {0}(12);
\draw (7)--node[anchor=east] {3} (9) ;
\draw (9)-- node[anchor=east] {2} (11)  ;
\draw (11)--node[anchor=east] {1}(13) ;
\draw (7)-- node[anchor= north west] {1}(5);
\draw (6) --node[anchor= north east] {0}(5);
\draw[dashed] (3)-- node[anchor=east] {0} (9);
\draw[dashed] (3)--node[anchor= west] {1}(8);
\draw[dashed] (2) --node[anchor= west] {0}(7);
\draw[dashed] (2)--node[anchor= east] {1}(6);
\end{tikzpicture}
}
\end{figure}
It is the graph for the $r$-primitive pair $d_{\{0,1,3\}}$ and $d_{\{0,1,3\}}s_2s_1$.

We use the version of the graph defined for right spectra; Du uses left spectra and left primitive pairs (thus 
in the notation of \cite{DuCell}, p.1395, for us $d_{12}=s_0s_1s_3s_2s_1$  and not $s_1s_2s_0s_1s_3$).

The graph is constructed as follows. Starting with the elements $s_0s_1s_3$ and $s_0s_1s_3s_2s_1$,
draw the graphs of $\Spr(s_0s_1s_3)$ and $\Spr(s_0s_1s_3s_2s_1)$ by drawing solid edges between vertices $x$ and $y$ in the same spectrum if $x<y$, $y=xs$ for some $s\in S$, and $\mathcal{R}(x)\neq \mathcal{R}(y)$. Between a vertex 
labelled by $x\in\Spr(s_0s_1s_3)$ and a vertex labelled by $xs_i\in\Spr(s_0s_1s_3s_2s_1)$, draw 
a dotted line if $xs_i\not\in\Spr(s_0s_1s_3)$. 
\begin{lem}[\cite{DuCell}]
The set of elements appearing as vertices in the graph gives a set of representatives of the left cells 
in $D$. All elements appearing as vertices belong to $\Gamma_{\{0,1,3\}}$.
\end{lem}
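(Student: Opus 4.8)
The plan is to derive both assertions from Du's cell analysis \cite{DuCell}, transported through the substitution $w\mapsto w^{-1}$ that turns his left spectra into the right spectra used here, separating the parts that are direct checks in $W(\tilde{B}_3)$ from the one genuine input from \cite{DuCell}. I would first anchor the construction. Since $s_0,s_1,s_3$ commute pairwise, $s_0s_1s_3=w_{\{0,1,3\}}$ is the longest element of the parabolic $\langle s_0,s_1,s_3\rangle\cong(\Z/2\Z)^3$; in particular its reduced words involve only $s_0,s_1,s_3$, so $\mathcal{L}(s_0s_1s_3)=\mathcal{R}(s_0s_1s_3)=\{0,1,3\}$. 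Moreover $s_0s_1s_3\in D$: in the description of $C(3)$ recalled in Section~\ref{compB3}, condition (a) holds with $x=z=1$ and $I=\{s_0,s_1,s_3\}\in S(3)$, condition (b) holds because any factorization $xw_Jz$ with $J$ not among $S(0),\dots,S(3)$ would have length at least $4$, and $s_0s_1s_3\notin C''(3)$; hence $s_0s_1s_3=d_{\{0,1,3\}}\in C(3)=D$.

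Next I would show that every vertex of the graph lies in $\Gamma_{\{0,1,3\}}$. By \cite{DuCell}, all elements of a single right spectrum lie in one right cell, and the two members of an $r$-primitive pair likewise lie in a common right cell. The vertices are, by construction, the elements of $\Spr(s_0s_1s_3)$ together with those of $\Spr(s_0s_1s_3s_2s_1)$, and $s_0s_1s_3$ and $s_0s_1s_3s_2s_1=d_{\{0,1,3\}}s_2s_1$ form an $r$-primitive pair; hence all vertices lie in the right cell of $s_0s_1s_3$. The left descent set is constant on a right cell and equals $\{0,1,3\}$ here, so this right cell is $\Gamma_{\{0,1,3\}}$, the unique right cell of $D$ with that left descent set. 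In particular all twelve vertices lie in $D$, which has exactly twelve left cells \cite{DuCell}, so it remains to check that they occupy twelve distinct ones. The nine undecorated vertex labels $\{0,1,3\},\{3\},\{0,1\},\{1,2\},\{0,2\},\{1,3\},\{0,3\},\{0\},\{1\}$ are the (pairwise distinct) right descent sets of the corresponding vertices, none equal to $\{2\}$; since a left cell of $D$ with right descent set other than $\{2\}$ is determined by that set, these nine vertices lie in nine distinct left cells. The three remaining vertices all have right descent set $\{2\}$, and that they lie in the three distinct left cells of $D$ marked by the decorations $\{2\},\{2'\},\{\hat{2}'\}$ is part of Du's analysis of this primitive pair in \cite{DuCell}. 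Altogether the twelve vertices form a complete set of representatives of the twelve left cells of $D$.

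The only step that is not a routine verification is separating the three left cells that share the right descent set $\{2\}$: this is invisible to descent sets, and also to the right-spectrum relation on its own (which only certifies membership in a common right cell), so one must fall back on Du's direct computation in \cite{DuCell} --- this is precisely why the decorated notation has to be introduced. Everything else reduces to elementary manipulation in $W(\tilde{B}_3)$, together with the standard facts that a one-sided descent set is constant on each cell of the opposite side and that right spectra and $r$-primitive pairs refine the partition of $W$ into right cells.
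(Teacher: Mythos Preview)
The paper does not give its own proof of this lemma: it is stated with the attribution \cite{DuCell} and no argument, the sentence following it simply recording the consequence that the graph is a cross-section of $D$ along $\Gamma_{\{0,1,3\}}$. Your proposal therefore supplies strictly more than the paper does, and what you supply is correct: you reduce the statement to (i) the routine verification that $s_0s_1s_3\in D$ with left descent set $\{0,1,3\}$, (ii) the standard facts that right spectra and $r$-primitive pairs lie in a single right cell and that one-sided descent sets are constant on cells of the opposite side, and (iii) a count, using that left cells in $D$ with right descent set $\ne\{2\}$ are determined by that set, leaving only the separation of the three $\{2\}$-cells to Du's explicit analysis. This is an accurate unpacking of what the citation to \cite{DuCell} is meant to cover; the only irreducible input from Du is exactly the one you isolate.
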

The graph therefore gives a cross-section of $D$ along the right cell $\Gamma_{\{0,1,3\}}$.
\subsection{The partition}
\subsubsection{Coxeter group automorphisms and based ring automorphisms}
Write 
\[
R:=\Rep(\PGL(2)\times\{\pm 1\})\cong\Rep(\PGL(2))[x]/(x^2-1),
\]
and $M$ for the Grothendieck group of $\SL(2)\times\{\pm 1\}$-representations with odd highest weight.
The ring $R$ is a based ring with basis $\{V(2n),xV(2n)\}_{n\geq 0}$, and both $R$ and $M$
are based $R-R$-bimodules, the latter with basis $\{V(2n+1), xV(2n+1)\}_{n\geq 0}$.

Note that $W$ has Coxeter group automorphism exchanging $s_0$ and $s_1$, that we denote $\varphi$. 
The automorphism $\varphi$ then sends left cells to left cells, 
right cells to right cells, and two-sided cells to two-sided cells. Because
it is easy to see that it fixes some elements in $D$, we have that $D$
is $\varphi$-stable. Moreover, we have \cite{LX}
\[
\gamma_{x,y,z}=\gamma_{\varphi(x),\varphi(y),\varphi(z)}
\]
for all $x,y,z\in W$, and so $\varphi$ induces a based ring automorphism of $J$. When $d$ is a distinguished involution in $D$ such that
$\varphi(d)=d$, $\varphi$ therefore induces a based ring automorphism of $t_dJt_d$.
The only such morphism is the identity, and so $\varphi$ fixes the corresponding 
rings pointwise. The automorphism $\varphi$ acts on the graph above by 
reflecting about the obvious fixed central column. 
\subsubsection{The partition}
Recall from section \ref{section equivariant sheaves and centrally extended points} that 
$J_D\cong K^{\PGL_2(\C)\times\{\pm 1\}}(Y\times Y)$, where
$Y$ is a centrally-extended $\PGL_2(\C)\times\{\pm1\}$-set of size $12$; one may take the underlying set of $Y$
to be $D\cap\mathcal{D}$ for ease of indexing. If the central extensions do
not appear in the isomorphism of \cite{BO}, then $J_D\cong\Mat_{12}(R)$. We will show that this fails: for some $y\in Y$,
the associated Schur multiplier $s_y$ for the stabilizer of $y$ is nontrivial, \textit{i.e.} some point 
$y$ is centrally extended. This will also follow independently from the geometric 
proof in section \ref{section the springer fiber}. Moreover, we will now determine precisely when this 
happens for the product set $Y\times Y$.

We may realize the ring $J_D$
as having elements given by $12\times 12$ matrices with diagonal entries in $R$, and off-diagonal entries
either in $R$ or in $M$, this last case occurring at entries indexed by distinguished involutions $d,d'$
such that exactly one of the Schur multipliers $s_d,s_{d'}$ is nontrivial. The monomial matrices
with nonzero entry at $d,d'$ make up the $t_dJt_d-t_{d'}Jt_{d'}$-bimodules $t_dJt_{d'}$. The diagonal
monomial matrices make up the rings $t_dJt_d$.

By Lemma \ref{lem1} (a), we have that for any $d\in\mathcal{D}\cap D$,
the ring $t_dJt_d$ is isomorphic as a based ring to $R$, and there exists a partition
$\mathcal{D}_A\sqcup\mathcal{D}_B=\mathcal{D}\cap D$ such that if both $d,d'\in\mathcal{D}_?$,
then under the isomorphism $t_dJt_d\cong R\cong t_{d'}Jt_{d'}$ of based rings, 
$t_dJt_{d'}\cong R$ as a based $R-R$-bimodule. On the other hand, if
$d\in \mathcal{D}_A$ but $d'\in\mathcal{D}_B$, then $t_dJt_{d'}$ is isomorphic to $M$ as 
a based $R-R$-bimodule.
\begin{theorem}
\label{partition theorem}
The partition mentioned above is
\[
\mathcal{D}_A=\{d_{\{0,1,3\}}, d_{\{2\}},d_{\{3\}}\},~~\mathcal{D}_B=\{d_{\{0,2\}}, d_{\{0,1\}}, d_{\{1,2\}}, d_{\{0,3\}}, d_{\{1,3\}},d_{\{2'\}}, d_{\{\hat{2}'\}}, d_{\{1\}}, d_{\{0\}}\}.
\]
That is, the sets of the partition are exactly the graphs of $\Spr(d_{\{0,1,3\}})$
and of $\Spr(d_{\{0,1,3\}}s_2s_1)$.
\end{theorem}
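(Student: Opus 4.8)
The plan is to compute, for each pair $(d,d')$ of distinguished involutions in $D$, whether the based $R$--$R$-bimodule $t_dJt_{d'}$ is isomorphic to the regular bimodule $R$ or to $M$, and to read off the partition $\mathcal{D}_A\sqcup\mathcal{D}_B$ from the answer. By the discussion preceding Theorem~\ref{partition theorem}, it suffices to determine which off-diagonal entries of the matrix realization of $J_D$ lie in $M$ rather than in $R$, and by Lemma~\ref{lem1}(b) this is precisely the set of pairs $(d,d')$ that are centrally extended. The key combinatorial input is that the sets of the claimed partition are exactly the vertex sets of the graphs of $\Spr(d_{\{0,1,3\}})$ and $\Spr(d_{\{0,1,3\}}s_2s_1)$; these two sets are joined only by dotted edges in Du's graph, while each is internally connected by solid edges. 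So the strategy is: (i) show that a solid edge between left cells $\Gamma$, $\Gamma'$ forces $t_{d_\Gamma}Jt_{d_{\Gamma'}}\cong R$, hence puts $d_\Gamma$, $d_{\Gamma'}$ in the same part; and (ii) exhibit at least one dotted edge across which the bimodule is $M$, hence the two parts are genuinely different.

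For step (i) I would argue as follows. If $x$ and $y=xs$ lie in adjacent vertices of the graph with $x<y$, then the corresponding left cells $\Gamma$, $\Gamma'$ are linked by left multiplication by a small element, and there is a standard identity in $J$ relating $t_x$, $t_y$ via the basis element $t_w$ for a suitable $w\in W_f$; concretely, right multiplication by the class $t_{\underline{s}}$ of the corresponding finite parabolic element induces a based bimodule isomorphism $t_{d_\Gamma}Jt_{d_\Gamma}\to t_{d_\Gamma}Jt_{d_{\Gamma'}}$, because the structure constants $\gamma_{x,y,z}$ along the edge are all $\pm 1$ and pair up basis elements of $R$ with basis elements of $R$ (never with the odd-highest-weight basis of $M$). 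Transitivity along the connected solid subgraph then shows all of $\{d_{\{0,1,3\}},d_{\{2\}},d_{\{3\}}\}$ lie in one part and all nine remaining involutions lie in the other. Here I would lean on the explicit small computations in $J$ carried out in Section~\ref{compB3} (the values of the relevant $\gamma_{x,y,z}$, extracted via Sage as announced), together with the $\varphi$-symmetry, which identifies the two halves of the graph that are mirror images and cuts the bookkeeping roughly in half.

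For step (ii) — which I expect to be the main obstacle — I need to show the bimodule does \emph{not} collapse, i.e. that $t_dJt_{d'}\cong M$ for $d$ in one part and $d'$ in the other, equivalently that the Schur multiplier is nontrivial for the corresponding point of $Y\times Y$. The clean way is to compare the based-ring structure: if the partition were trivial (all bimodules $\cong R$), then $J_D\cong\Mat_{12}(R)$, and one can test this against an honest structure-constant computation in $J_D$ for a single well-chosen triple of basis elements straddling a dotted edge — the multiplication rule in $M$ (tensoring two odd-highest-weight representations lands in the even part, with Clebsch--Gordan coefficients) differs from the rule in $R$, and this discrepancy is visible already at low weight. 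Alternatively, and more robustly, I would invoke the cocenter obstruction of Corollary~\ref{cor_Je} (the structure of $C_e$ as a module over the center is incompatible with $\Mat_{12}(R)$), which rules out the trivial partition outright; then step (i) forces the partition to be exactly the two-block partition into the two graph components, and matching which block is ``$A$'' and which is ``$B$'' is just a normalization. The delicate point throughout is keeping track of \emph{based} isomorphisms rather than mere abstract ones — one must verify that the identifications $t_dJt_d\cong R$ can be chosen compatibly along a spanning tree of solid edges so that the induced identification on a dotted edge is canonical and the $M$-vs-$R$ dichotomy is well defined; the $\varphi$-equivariance and the uniqueness of the based automorphism of $R$ (only the identity) are what make this rigidification possible.
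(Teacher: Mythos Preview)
There are two genuine gaps. In step (i), the mechanism you propose does not exist inside $J_D$: a simple reflection $s$ lies in the two-sided cell $C(1)$, so $t_s\notin J_D$ and $t_w t_s=0$ for every $w\in D$; there is no element of $J_D$ playing the role of your ``$t_{\underline{s}}$''. To carry $t_dJt_{d'}$ to $t_dJt_{d''}$ by right multiplication one must use an element of $t_{d'}Jt_{d''}$, and for this to yield a \emph{based} bimodule isomorphism that element must correspond to $V(0)$ --- but whether the minimal basis element of $t_{d'}Jt_{d''}$ is $V(0)$ or $V(1)$ is precisely the parity you are trying to determine, so the argument is circular. Your assertion that ``structure constants along the edge are all $\pm1$ and pair up basis elements of $R$ with basis elements of $R$'' is the desired conclusion, not a known input; nothing in Du's description of the right-spectrum graph, nor any general cell-theoretic result cited in the paper, says that solid edges preserve the central-extension class. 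In step (ii), invoking Corollary~\ref{cor_Je} is also circular: its proof opens with ``Part a) follows directly from the fact that both subsets in the partition of Theorem~\ref{partition theorem} are nonempty.'' The Springer-fiber argument of Section~\ref{section the springer fiber} is logically independent and does show the partition is nontrivial, but without a valid step (i) it cannot tell you which of the twelve involutions land in which block.

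The paper instead proves the theorem by seven direct computations, one for each $\varphi$-orbit of distinguished involutions $d'\neq d_{\{0,1,3\}}$. In each case they choose a specific nonidentity $x\in\Gamma_{\{0,1,3\}}\cap\Gamma_{\{0,1,3\}}^{-1}$ (so that $t_x$ corresponds to some $V(2n)$ with $n\geq1$) and a specific $y$ with $t_y\in t_{d_{\{0,1,3\}}}Jt_{d'}$, compute the product $C_xC_y$ in $H$, discard the terms lying outside $D$, and count the survivors. By Clebsch--Gordan, exactly one surviving term forces $t_y$ to correspond to $V(0)$ (hence $d'\in\mathcal{D}_A$), while exactly two force $V(1)$ (hence $d'\in\mathcal{D}_B$). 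Each of the seven cases is a single explicit product of Kazhdan--Lusztig basis elements together with a check, via Du's combinatorial description of $D$, of which resulting terms lie in $D$.
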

The theorem is summarized by the table
\begin{center}
\begin{tabular}{|c|c|c|c|}
\hline 
$\mathcal{L}(c)$ & Distinguished involution & Orbit & Component\\ 
\hline 
0,1,3 & $s_0s_1s_3$ & 1 & $\Spr(d_{\{0,1,3\}})$   \\ 
\hline 
2 & $s_2s_0s_1s_3s_2$ & 2 &$\Spr(d_{\{0,1,3\}})$  \\ 
\hline 
0,2 & $s_0s_2s_0$  & 3 & $\Spr(d_{\{0,1,3\}}s_2s_1)$ \\ 
\hline 
1,2 & $s_1s_2s_1$ & 3 &$\Spr(d_{\{0,1,3\}}s_2s_1)$  \\ 
\hline 
0,1 & $s_1s_2s_0s_2s_1$ & 4 & $\Spr(d_{\{0,1,3\}}s_2s_1)$\\ 
\hline 
1,3 & $s_1s_3s_2s_1s_3$ & 5& $\Spr(d_{\{0,1,3\}}s_2s_1)$ \\ 
\hline 
3 & $s_3s_2s_0s_1s_3s_2s_3$ & 6&$\Spr(d_{\{0,1,3\}})$  \\ 
\hline 
0,3 & $s_3s_0s_2s_0s_3$ & 5& $\Spr(d_{\{0,1,3\}}s_2s_1)$ \\ 
\hline 
2' & $s_2s_3s_0s_2s_0s_3s_2$ & 7& $\Spr(d_{\{0,1,3\}}s_2s_1)$ \\ 
\hline 
1 & $s_1s_2s_3s_0s_2s_0s_3s_2s_1$ & 8&$\Spr(d_{\{0,1,3\}}s_2s_1)$  \\ 
\hline 
$\hat{2}'$ & $s_2s_1s_3s_2s_1s_3s_2$ & 7 & $\Spr(d_{\{0,1,3\}}s_2s_1)$  \\ 
\hline 
0 & $s_0s_2s_1s_3s_2s_1s_3s_2s_0$ & 8 & $\Spr(d_{\{0,1,3\}}s_2s_1)$   \\ 
\hline 
\end{tabular} 
\end{center}
where the rows are indexed by right cells of $D$; for a right cell $\Gamma$, 
the ``orbit" column describes which $\varphi$-orbit $\Gamma$ belongs to
and $\mathcal{L}(c)$ is the set of indices of simple reflections corresponding to 
$\mathcal{L}(w)\subset S$ for any $w\in c$. 
\begin{rem}
The other two-sided cells of $W$ with multiple spectra appearing in the graphs of an $r$-primitive
pair are $A,B,C$. For example, it is immediate that multiple spectra cannot appear in the graphs 
of an $r$-primitive pair for the subregular cell $G$, by uniqueness of reduced expressions for 
elements of the subregular cell. We note that it is known in many cases that the subregular
summand of $J$ is a matrix algebra, by work of Xu \cite{XuSub}.
\end{rem}
\begin{lem}
The distinguished involutions in $D$ are precisely those appearing in the table in Theorem
\ref{partition theorem}.
\end{lem}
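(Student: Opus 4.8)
The plan is to use two facts already in hand: the cell $D$ contains exactly twelve left cells, hence exactly twelve distinguished involutions, and each left cell contains exactly one of them. Since the table in Theorem~\ref{partition theorem} lists twelve pairwise distinct elements, it suffices to show that every element $d$ appearing in it is a distinguished involution lying in $D$; a counting argument then forces the list to be all of $\mathcal{D}\cap D$. Alternatively, the whole statement can be read off from Du's tables in \cite{DuCell} once his left-handed conventions are translated into our right-handed ones, exactly as was done above for the vertices of the graph; I outline the self-contained verification.

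For a fixed entry $d$ I would make three checks. First, $d^2=1$: each displayed word becomes a palindrome after applying the commutation relations $s_is_j=s_js_i$ for the commuting pairs among $s_0,s_1,s_3$ (and $s_1,s_3$), and a palindromic word represents an involution; for the three length-$3$ entries $s_0s_1s_3=w_{\{0,1,3\}}$, $s_0s_2s_0=w_{\{0,2\}}$, $s_1s_2s_1=w_{\{1,2\}}$ this is just the fact that the longest element of a finite parabolic is an involution. Second, $d\in D$: this is the membership test provided by the description $C(3)=C'(3)\setminus C''(3)$ recalled above --- conditions (a) and (b) for $C'(3)$ together with $d\notin C''(3)$ --- which is immediate to verify by computer and is classical for the three longest-element entries. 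Third, the defining identity $a(d)=\ell(d)-2\delta(d)$: since $a\equiv 3$ on $D$ and each $\ell(d)$ is odd, this reduces to verifying that $\delta(d)=(\ell(d)-3)/2$, i.e. that the Kazhdan--Lusztig polynomial $P_{1,d}(v)$ has degree exactly $(\ell(d)-3)/2$; this is immediate for $w_{\{0,1,3\}},w_{\{0,2\}},w_{\{1,2\}}$ (where $P_{1,w_I}=1$, so $\delta=0$), and for the remaining nine entries the required value of $\delta$ is the one recorded by Du in \cite{DuCell}, confirmable by a direct Kazhdan--Lusztig computation.

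I expect the third check to be the only one with genuine content: membership in $D$ and the involution property are elementary Coxeter-group combinatorics, whereas a general involution of $D$ need not be distinguished, so the identification really requires Kazhdan--Lusztig input, whether imported from \cite{DuCell} or recomputed. Granting the three properties for all twelve entries, one has produced twelve pairwise distinct distinguished involutions inside a two-sided cell whose total number of distinguished involutions is twelve, so they exhaust $\mathcal{D}\cap D$. As a by-product, since the left and right descent sets of each entry both equal the set $\mathcal{L}(c)$ recorded in its row (the three $\{s_2\}$-type cells being separated by their right spectra as in \cite{DuCell}), this also identifies the distinguished involution of each individual one-sided cell in $D$.
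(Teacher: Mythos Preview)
Your approach is essentially the same as the paper's: reduce to the counting argument, then verify that each of the twelve tabulated elements is a distinguished involution lying in $D$. A few minor differences are worth noting.

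First, your check that $d^2=1$ is superfluous: in the paper's conventions, membership in $\mathcal{D}$ is \emph{defined} by the equality $a(d)=\ell(d)-2\delta(d)$, and the involution property is a consequence, not a hypothesis. So your argument is correct but does more than needed.

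Second, the paper sources the Kazhdan--Lusztig data from Goresky's tables \cite{GoreskyTable} rather than from Du; you should check whether Du actually records $\delta(d)$ for these elements before citing him for that step.

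Third, the paper organizes the two remaining checks in the opposite order and exploits the interaction between them: it first verifies $\ell(w)-2\delta(w)=3$ from the tables, and then uses the general inequality $a(w)\leq \ell(w)-2\delta(w)$ to conclude that if $w\notin C(3)$ then $w$ must lie in a cell of lower $a$-value; ruling out $C(0),C(1),C(2)_1,C(2)_2$ is then immediate because each tabulated word visibly contains a subword $w_I$ with $\ell(w_I)=3$, violating condition (b) for those smaller cells. This avoids having to check condition (b) for $C'(3)$ and the exclusion $w\notin C''(3)$ directly, which your route requires. Both routes are valid; the paper's is marginally more economical.
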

\begin{proof}
By counting, it suffices to show that every element in the table is a distinguished involution. 
For each element $w$ in the table, we can check that $\ell(w)-2\deg P_{e,w}=3$ by 
appealing to the calculations carried out by Goresky in \cite{GoreskyTable}. To conclude, it 
suffices to check that each element in the table lies in the cell $C(3)$, the unique
two-sided cell in $W$ on which $a=3$. 

By the inequality $a(w)\leq \ell(w)-2\deg P_{e,w}$ and
theorem 7.6 (b) of \cite{ShiSurv}, we see that any element in the table not in $C(3)$
is contained in either $C(2)_2$, $C(2)_1$, $C(1)$, or $C(0)$. As each element of the table is by 
construction of form $w=xw_Iy$ with $\ell(w)=\ell(x)+\ell(w_I)+\ell(y)$ and $\ell(w_I)=3$,
each element fails condition (b) (respectively (b)') required to lie in 
$C(1)$ (respectively $C(2)_2$ or $C(2)_1$). Clearly no element in the table lies in $C(0)$.
Therefore every element in the table is in $C(3)$ and the lemma is proved.
\end{proof}
Therefore all of the subrings $t_dJt_d\subset J_D$ and bimodules $t_dJt_{d'}$ are given by choosing
distinguished distinguished involutions from the table above.

\subsection{Proof of Theorem \ref{partition theorem}}
We will prove Theorem \ref{partition theorem} by computing judiciously chosen products in 
$t_dJt_d\cdot t_dJt_{d'}$ for 
various distinguished involutions $d,d'$ and comparing the results against the Clebsch-Gordon rule.
Namely, we note that if 
\[
V(2n)\otimes V(m)=V(2n+m)\oplus V(\lambda)
\]
is a sum of two irreducible representations,
then $\lambda=2n-m$, $2n\geq m$, and $2n+m-(2n-m)=2$ whence $m=1$. On the other hand, if $n\neq 0$ and
\[
V(2n)\otimes V(m)=V(\lambda),
\]
then $m=0$ and $\lambda=2n$. Of course, the same pattern in the number of summands still holds when taking into account the character $x$ of $\{\pm 1\}$. Therefore to show that $d$ and $d'$ are in the same set, it suffices to produce
elements $t_x\in t_dJt_d$ and $t_y\in t_dJt_{d'}$ such that $x\neq d$ and $t_xt_y=t_z$ for some 
$z$. To show that $d$ and $d'$ are in different sets, it suffices to produce elements
$t_x$ and $t_y$ as above such that $t_xt_y=t_z+t_{z'}$ for $z\neq z'$. Moreover, it 
suffices to test just one element in each $\varphi$-orbit. This reduces the number of necessary computations to 
seven. Upon producing an element in $y\in\Gamma_X\cap\Gamma_Y^{-1}$, we shall write it $y=y^Y_X$.
\subsubsection{$d_{\{0,1,3\}}$ and $d_{\{0,2\}}$ lie in different sets}
First we will show that $d_{\{0,1,3\}}$ and $d_{\{0,2\}}$ are in different sets.

Using the description of $D$ in \cite{DuCell} and descent sets, we calculate that
\[
x_{\{0,1,3\}}=d_{\{0,1,3\}}s_2d_{\{0,1,3\}}\in\Gamma_{\{0,1,3\}}\cap\Gamma_{\{0,1,3\}}^{-1},
\]
and by construction
\[
y_{\{0,1,3\}}^{\{0,2\}}:=d_{\{0,1,3\}}s_2s_0\in\Gamma_{\{0,1,3\}}\cap\Gamma_{\{0,2\}}^{-1}.
\]
Next, we compute that
\[
C_{x_{\{0,1,3\}}}C_{y_{\{0,1,3\}}^{\{0,2\}}}=
-(v^{-3}+3v^{-1}+3v+v^3)\left(C_{s_3s_1s_0s_2s_3s_1s_0s_2s_0} + C_{s_3s_2s_1s_0s_2s_1s_0} + C_{s_3s_1s_0s_2s_0}\right).
\]
The element $s_3s_2s_1s_0s_2s_1s_0$ does not lie in $D$. Indeed, we see that 
$s_3s_2s_1s_0s_2s_1s_0=x\cdot w_{A_3,0}$
is a length-increasing multiplication with the longest word in the parabolic subgroup of 
$W$ corresponding to $A_3$. It follows from \cite{ShiSurv} that $a(s_3s_2s_1s_0s_2s_1s_0)>3$.
On the other hand, we compute that $s_3s_1s_0s_2s_3s_1s_0s_2s_0$ is in $D$. Therefore we have
\[
t_{x_{\{0,1,3\}}}t_{y_{\{0,1,3\}}^{\{0,2\}}}=
t_{s_3s_1s_0s_2s_3s_1s_0s_2s_0} + t_{y_{\{0,1,3\}}^{\{0,2\}}}.
\]
Comparing this to the Clebsch-Gordon rule, we see that $t_{y_{\{0,1,3\}}^{\{0,2\}}}$
must correspond to $V(1)$, and $x_{\{0,1,3\}}$ to $V(2)$. It follows that $d_{\{0,1,3\}}$ and $d_{\{0,2\}}$
are in different sets, and hence that $d_{\{0,1,3\}}$ and $d_{\{1,2\}}=\varphi(d_{\{0,2\}})$ are in different 
sets. Denote the set containing $d_{\{0,1,3\}}$ by $\mathcal{D}_A$, and the set containing $d_{\{0,2\}}$
and $d_{\{1,2\}}$ by $\mathcal{D}_B$.
\subsubsection{$d_{\{2\}}$ lies in $\mathcal{D}_A$}
Next we claim that $d_{\{2\}}$ lies in $\mathcal{D}_A$. Note that $y_{\{0,1,3\}}^{\{2\}}=d_{\{0,1,3\}}s_2$
is in $D$, by direct computation; its left descent set shows that it is in $\Gamma_{d_{\{0,1,3\}}}$. 
By construction of the graph in \cite{DuCell}, $d_{d_{\{0,1,3\}}}s_2\in\Gamma_{d_2}^{-1}$. We compute that
\[
C_{x_{\{0,1,3\}}}C_{y_{\{0,1,3\}}^{\{2\}}}=-(v^{-3}+3v^{-1}+3v+v^3)C_{d_{\{0,1,3\}}s_2d_{\{0,1,3\}}s_2},
\]
we see that $t_{y_{\{0,1,3\}}^{\{2\}}}$ must correspond to $V(0)$ and that $d_{\{0,1,3\}}$ 
and $d_{\{2\}}$ are in the same set, so that $d_{\{2\}}\in\mathcal{D}_A$.
\subsubsection{$d_{\{0,1\}}$ lies in $\mathcal{D}_B$}
\label{unconditional counterexample}
We claim that $d_{\{0,1\}}$ lies in $\mathcal{D}_B$. The distinguished involution $d_{\{0,1\}}$
is the only $\varphi$-fixed distinguished involution in $\mathcal{D}_B$.
The element $y^{\{0,1\}}_{\{0,1,3\}}=d_{\{0,1,3\}}s_2s_0s_1$ is in 
$\Gamma_{\{0,1,3\}}\cap\Gamma_{\{0,1\}}^{-1}$, again by construction. We compute that
\begin{multline*}
C_{x_{\{0,1,3\}}}C_{y^{\{0,1\}}_{\{0,1,3\}}}=
-(v^{-3}+3v^{-1}+3v+v^3)\left(C_{s_3s_1s_0s_2s_3s_1s_0s_2s_1s_0} +C_{s_3s_1s_0s_2s_1s_0}\right)
\\ 
+(v^{-4}+4v^{-2}+6+4v^2+v^4)C_{s_3s_2s_1s_0s_2s_1s_0}
\end{multline*}
The element $s_3s_2s_1s_0s_2s_1s_0$ is not in $D$, but
$s_3s_1s_0s_2s_3s_1s_0s_2s_1s_0$ and $s_3s_1s_0s_2s_1s_0$ are. As $J_D$ is a two-sided ideal, we have
\[
t_{x_{\{0,1,3\}}}t_{y^{\{0,1\}}_{\{0,1,3\}}}=t_{s_3s_1s_0s_2s_3s_1s_0s_2s_1s_0} +t_{s_3s_1s_0s_2s_1s_0}.
\]
We see that $d_{\{0,1\}}\in\mathcal{D}_B$.

\subsubsection{$d_{\{1,3\}}$ and $d_{\{0,3\}}$ lie in $\mathcal{D}_B$}
We claim that $d_{\{0,3\}}$ and $d_{\{1,3\}}$ lie in $\mathcal{D}_B$.
The element $y_{\{0,2\}}^{\{0,3\}}=d_{\{0,2\}}s_3$ is in $D$, because it is a length-increasing
product starting from $d_{\{0,2\}}$, and is too short to be excluded. Its descent sets show that
it must be in $\Gamma_{\{0,2\}}\cap\Gamma_{\{0,3\}}^{-1}$. For the sake of variety, let $x_{\{0,2\}}=d_{\{0,2\}}d_{\{0,1,3\}}d_{\{0,2\}}$. We see computationally that $x_{\{0,2\}}\in\Gamma_{\{0,2\}}\cap\Gamma_{\{0,2\}}^{-1}$.  We therefore compute
\[
C_{x_{\{0,2\}}}C_{y_{\{0,2\}}^{\{0,3\}}}=
-(v^{-3}+2v^{-1}+2v+v^3)C_{s_0s_2s_3s_1s_0s_2s_0s_3}.
\]
We conclude that $d_{\{0,2\}}$ and $d_{\{0,3\}}$ are in the 
same set. Hence $d_{\{0,3\}},d_{\{1,3\}}\in\mathcal{D}_B$.
\subsubsection{$d_{\{3\}}$ lies in $\mathcal{D}_A$} 
We claim that $d_{\{3\}}$ lies in $\mathcal{D}_A$.
Put $y_{\{0,1,3\}}^{\{3\}}=d_{\{0,1,3\}}s_2s_3$. By construction $y_{\{0,1,3\}}^{\{3\}}\in \Gamma_{\{0,1,3\}}\cap\Gamma_{\{3\}}^{-1}$. We compute
\[
C_{x_{\{0,1,3\}}}C_{y_{\{0,1,3\}}^{\{3\}}}=
-(v^{-3}+3v^{-1}+3v+v^3)C{s_3s_1s_0s_2s_3s_1s_0s_2s_3}.
\]
We conclude according to the reasoning of the previous cases that $d_{\{3\}}\in\mathcal{D}_A$.
\subsubsection{$d_{\{2'\}}$ and $d_{\{\hat{2}'\}}$ lie in $\mathcal{D}_B$}
We claim that $d_{\{0,1,3\}}$ and $d_{\{2'\}}$ are in different sets.
The element $y_{\{0,1,3\}}^{\{2'\}}=d_{\{0,1,3\}}s_2s_0s_3s_2=s_1s_3d_{\{0,2\}}s_3s_2$
lies in $\Gamma_{\{0,1,3\}}\cap\Gamma_{\{2'\}}^{-1}$ by construction.
We compute
\begin{equation}
\label{2', 2'hat computation equation}
C_{x_{\{0,1,3\}}}C_{y_{\{0, 1, 3\}}^{\{2'\}}}=
-(v^{-3}+3v^{-1}+3v+v^3)\left(C_{s_3s_1s_0s_2s_3s_1s_0s_2s_3s_0s_2}+C_{s_3s_2s_1s_0s_2s_3s_1s_0s_2}
+C_{s_3s_1s_0s_2s_3s_0s_2}\right).
\end{equation}
Again we compute that the middle term is not in $D$, but the longest and shortest
terms are. As before we compare with the Clebsch-Gordon rule and determine that this
module structure corresponds to $M$. Hence $d_{\{2'\}},d_{\{\hat{2}'\}}\in\mathcal{D}_B$.
\subsubsection{$d_{\{1\}}$ and $d_{\{0\}}$ lie in $\mathcal{D}_B$}
We claim that $d_{\{0,1,3\}}$ and $d_{\{1\}}$ are in different sets. Define 
$y_{\{0, 1, 3\}}^{\{1\}}:=y_{\{0, 1, 3\}}^{\{2'\}}s_1$, so
that $y_{\{0, 1, 3\}}^{\{1\}}\in\Gamma_{d_{\{0,1,3\}}}\cap\Gamma_{d_{\{1\}}}^{-1}$ as in all the previous cases. 
We have
\[
C_{y_{\{0, 1, 3\}}^{\{1\}}}=C_{y_{\{0, 1, 3\}}^{\{2'\}}}C_{s_1},
\]
and thus can reuse some of our calculations from the previous case, multiplying each term in the right hand side 
of \eqref{2', 2'hat computation equation}. We have
\[
C_{s_3s_1s_0s_2s_3s_0s_2}C_{s_1}=C_{s_3s_1s_0s_2s_3s_0s_2s_1}
\]
and $s_3s_1s_0s_2s_3s_0s_2s_1\in D$. Recall that $s_3s_2s_1s_0s_2s_3s_1s_0s_2\not\in D$
and in fact one has $a(s_3s_2s_1s_0s_2s_3s_1s_0s_2)>3$. We have
\[
C_{s_3s_2s_1s_0s_2s_3s_1s_0s_2}C_{s_1}=\sum_{z\leq_R s_3s_2s_1s_0s_2s_3s_1s_0s_2}\tilde{\mu}(z,s_3s_2s_1s_0s_2s_3s_1s_0s_2) C_z,
\]
where the coefficients $\tilde{\mu}$ are all integers. This follows from 
(\cite{LusUnequal}, Theorem 6.6) because 
\[
s_1\not\in\mathcal{R}(s_3s_2s_1s_0s_2s_3s_1s_0s_2).
\]
Now, $z\leq_R s_3s_2s_1s_0s_2s_3s_1s_0s_2$ by definition implies $z\leq_{LR}s_3s_2s_1s_0s_2s_3s_1s_0s_2$, thus 
by \cite{LusUnequal}, P4, we have 
\[
a(z)\geq a(s_3s_2s_1s_0s_2s_3s_1s_0s_2)>3,
\]
and so $C_{s_3s_2s_1s_0s_2s_3s_1s_0s_2}C_{s_1}$ makes no contribution. Finally, we have
\[
C_{s_3s_1s_0s_2s_3s_1s_0s_2s_3s_0s_2}C_{s_1}=C_{s_3s_1s_0s_2s_3s_1s_0s_2s_3s_0s_2s_1},
\]
and $s_3s_1s_0s_2s_3s_1s_0s_2s_3s_0s_2s_1$ is in $D$. It follows that
\[
t_{x_{\{0,1,3\}}}t_{y_{\{0, 1, 3\}}^{\{1\}}}=
t_{s_3s_1s_0s_2s_3s_1s_0s_2s_3s_0s_2s_1}+t_{s_3s_1s_0s_2s_3s_0s_2s_1}
\]
and so $d_{\{1\}},d_{\{0\}}\in\mathcal{D}_B$. 
\qed
\subsection{Computation of the cocenter}\label{cocSO6}
We record the following consequence of the above computation. Let $R$, $\tilde R$ be the rings of invariant functions on 
$\PGL(2)$ and $\SL(2)$ respectively. Let $A$ be the  subring in $\Mat_2(\tilde R)$ consisting of matrices
$(a_{ij})$ such that $a_{ij}(-x)=(-1)^{i-j}a_{ij}(x)$. 

\begin{cor}\label{cor_Je}
a) The algebra $J_e$ is Morita equivalent to $A\otimes \Ce[\Zet/2\Zet]$.

b) The center of  $J_e$ is isomorphic $R\otimes \Ce[\Zet/2\Zet]$. 
The cocenter $J_e/[J_e,J_e]$  is isomorphic, as a module over the center, to $(R\oplus k)\otimes \Ce[\Zet/2\Zet]$, where
$R$ acts on $k$ via the evaluation at the order two element $s\in \PGL(2)$.
The isomorphism sends a matrix $A=(a_{ij})$ to $(Tr(A), a_{11}(\tilde s))$ where $\tilde s$ is an order 4 element in $\SL(2)$.  

c) The abstract (rather than based) ring $J_e$ is not isomorphic to a ring of the form $K^{Z_e}(Y\times Y)$ for any finite $Z_e$-set $Y$.
\end{cor}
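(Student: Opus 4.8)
The plan is to deduce all three parts from the based-ring description of $J_e = J_D$ produced by Theorem \ref{partition theorem}: after choosing a set of $12$ primitive idempotents $t_d$, the algebra is realized as $12\times 12$ matrices with diagonal entries in $R$, off-diagonal entries in $R$ when $d,d'$ lie in the same block of the partition $\mathcal D_A\sqcup\mathcal D_B$, and in $M$ otherwise. For part (a), I would observe that the $\{\pm 1\}$-factor in $Z_e^{red}=\SO(3)\times\{\pm1\}$ is central and splits off, contributing the tensor factor $\Ce[\Zet/2\Zet]=\Rep(\{\pm1\})$; this reduces us to the $\PGL(2)$-part. There the partition has two blocks, so $J_e$ (modulo the $\Zet/2$-factor) is Morita equivalent to the endomorphism algebra of the bimodule $R\oplus M$ over $R$, i.e. the $2\times 2$ array $\left(\begin{smallmatrix} R & M\\ M & R\end{smallmatrix}\right)$ with the obvious multiplication coming from $M\otimes_R M\cong R$. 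I would then identify this array with the subalgebra $A\subset\Mat_2(\tilde R)$: writing $\tilde R = \Ce[x]$-style invariants of $\SL(2)$ with the $\Zet/2$-grading by the central element $-1\in\SL(2)$, the even part is $R$ and the odd part is exactly $M$ (Grothendieck group of odd-highest-weight representations), and the multiplication rules match. This gives (a).

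For part (b), the center of $A\otimes\Ce[\Zet/2\Zet]$: a central matrix $(a_{ij})\in A$ must be diagonal with $a_{11}=a_{22}$, and the parity constraint forces $a_{11}$ to be even, i.e. in $R$; so $Z(A)\cong R$ and $Z(J_e)\cong R\otimes\Ce[\Zet/2\Zet]$. For the cocenter, I would compute $A/[A,A]$ directly: the trace map $\Tr\colon A\to R$ (landing in $R$ because the diagonal entries of a commutator sum to something even) is one functional; its kernel on the cocenter is spanned by the classes of off-diagonal elementary matrices $e_{12}m$, $m\in M$, modulo the relations $[e_{12}m, e_{21}]= e_{12}m e_{21}-e_{21}e_{12}m$ and similar, which collapse $M$ down to $M/(\text{action of }[\cdot,\cdot])$. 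The point is that $M$, as a bimodule, becomes $R$ localized/evaluated at the order-two element $s\in\PGL(2)$: an odd representation restricted to the centralizer of $s$ (a torus) sees only its value there, and the commutator relations identify $[e_{12}m]$ with $[e_{12}m']$ precisely when $m,m'$ agree after this evaluation. Concretely I would send $(a_{ij})\mapsto (\Tr(a_{ij}), a_{11}(\tilde s))$ where $\tilde s\in\SL(2)$ has order $4$ (so $\tilde s^2=-1$), check it is well-defined on the cocenter and surjective onto $(R\oplus k)$, and count dimensions in each graded piece to see it is an isomorphism; tensoring with $\Ce[\Zet/2\Zet]$ gives the stated answer, with the $R$-module structure on $k$ being evaluation at $s$ because $\tilde s$ maps to $s$.

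For part (c), I would argue by contradiction using (b): if $J_e\cong K^{Z}(Y\times Y)$ as an abstract ring for some finite $Z$-set $Y$ (with $Z$ reductive, or even just $Z=Z_e^{red}$ allowing a nontrivial central-character action), then by the structure of such $K$-theory rings the cocenter would be a \emph{free} module over the center $\bigoplus_y \Rep(\Stab_Z(y))^{\text{invariants}}$ — more precisely, each diagonal block $K^Z(Y_i\times Y_i)$ for an orbit $Y_i\cong Z/H$ is Morita equivalent to $\Rep(H)$, whose cocenter equals $\Rep(H)$ itself (the class functions on $H$), a rank-one free module over its own center. The extra summand $k$ appearing in (b), on which $R$ acts through evaluation at $s$ rather than freely, is incompatible with this: it is a torsion-type quotient, not a free piece, and no genuine $\Rep(H)$ can produce it. I would make this precise by noting that for $K^Z(Y\times Y)$ the cocenter-over-center module is always a sum of rings of the form $\Rep(H)$ over $\Rep(H)^{?}$, hence has no summand annihilated by a maximal ideal of the center, whereas $k$ in (b) is annihilated by the kernel of evaluation at $s$ in $R$. \emph{The main obstacle} I anticipate is part (b): pinning down exactly which quotient of the bimodule $M$ survives in $[A,A]$, and verifying that the surjection onto the ``extra'' line $k$ is well-defined and that evaluation happens at the order-two element $s\in\PGL(2)$ (equivalently the order-four element in $\SL(2)$) rather than at some other special element — this requires carefully tracking the bimodule multiplication $M\otimes_R M\to R$ through the Clebsch–Gordan bookkeeping already set up in the proof of Theorem \ref{partition theorem}.
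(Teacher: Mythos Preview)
Your approach to (a) and the general shape of (b) match the paper's (very brief) argument, which simply says (a) follows from both blocks of the partition being nonempty and (b) follows easily from (a). Your identification of $A$ with the $2\times 2$ array $\left(\begin{smallmatrix} R & M\\ M & R\end{smallmatrix}\right)$ via the $\Zet/2$-grading on $\tilde R=\Ce[t]$ is correct and is exactly what is needed.

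There is a minor confusion in your (b): the off-diagonal elementary matrices $e_{12}m$ are \emph{not} what spans the kernel of $\Tr$ on the cocenter --- they are already commutators, since $e_{12}m=[e_{11},e_{12}m]$ (note $e_{11}\in A$, while your ``$e_{21}$'' without an odd entry is not in $A$). The kernel of $\Tr$ on $A/[A,A]$ is spanned by classes of traceless diagonal matrices $\diag(a,-a)$, $a\in R$, and the commutators $[e_{12}m,e_{21}m']=\diag(mm',-mm')$ kill exactly those with $a\in M\cdot M=t^2R$. Since $t(\tilde s)=0$, this ideal is the kernel of evaluation at $s$, so the extra line $k$ drops out. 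Your stated map $(a_{ij})\mapsto(\Tr(A),a_{11}(\tilde s))$ is correct; only the narrative around it needs fixing.

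The real gap is in (c). Your orbit-by-orbit argument does not control the cocenter of $K^{Z_e}(Y\times Y)$: off-diagonal blocks $K^{Z_e}(Y_i\times Y_j)$ contribute commutators linking the diagonal blocks, so the cocenter is not simply $\bigoplus_i \Rep(H_i)$, and the claim ``cocenter-over-center is a sum of $\Rep(H)$'s'' is not justified (and need not hold for general reductive $Z$). The paper's argument avoids this entirely by using one structural fact you did not invoke: since $\PGL(2)$ is connected, it acts trivially on any finite set, hence $K^{Z_e}(Y\times Y)\cong K^{\Zet/2\Zet}(Y\times Y)\otimes R$. The first factor is the convolution algebra of a finite group acting on a finite set, hence a semisimple $\Ce$-algebra; tensoring with $R$ then forces the cocenter to be free over the center. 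Your computation in (b) shows $J_e$ fails this (the summand $k$ is annihilated by the maximal ideal at $s$), and that finishes (c). So the torsion criterion you isolated is right; what is missing is the connectedness-plus-semisimplicity step that makes it apply uniformly to every $K^{Z_e}(Y\times Y)$.
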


\proof Part a) follows directly from the fact that both subsets in the partition of Theorem \ref{partition theorem} are nonempty.
 Part b) easily follows from a). 
Since $Z_e\cong \PGL(2) \times \Zet/2\Zet$, for any finite $Z_e$-set $Y$ we have
$K^{Z_e}(Y\times Y)\cong K^{\Zet/2\Zet}(Y\times Y)\otimes R$. Since  $K^{\Zet/2\Zet}(Y\times Y)$ is easily seen to be
a semisimple ring, it follows that the cocenter of $K^{Z_e}(Y\times Y)$ is a free module over its center. Statement (b) shows that this property is not
shared by the ring $J_e$.  \qed
\section{The Springer fiber}
\label{section the springer fiber}
In this section we show existence of nontrivial central extensions by an argument based on geometry of the Springer fiber $\BB_e$
using the criterion of Lemma \ref{lem2}. 

Let   $V$ be a 6 dimensional symplectic space, we have $\LG=\Sp(V)$.

We write $V=V_2\otimes V_3$ where $V_2$ is a two-dimensional symplectic and $V_3$ is a 3 dimensional orthogonal 
one.
Let $\bar{e}$ be a nonzero nilpotent acting on $V_2$ and $e=\bar{e}\otimes Id_{V_3}$.
Then the reductive part of the centralizer $Z=\mathrm{O}(3) =\SO(3)\times \{\pm 1\}$; in particular, it is connected
modulo the center of $\LG$. 

Consider the Springer fiber $\BB_e$.

\begin{prop}
 The natural map 
\begin{equation}\label{1}
K^Z(\BB_e)\to K(\BB_e)
\end{equation} is not surjective.
\end{prop}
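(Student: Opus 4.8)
The proposition asserts that $K^Z(\BB_e) \to K(\BB_e)$ fails to be surjective, where $e \in \ssp_6$ has three equal Jordan blocks of size $2$ and $Z = \mathrm{O}(3)$. My plan is to compute both $K$-groups explicitly from an understanding of the geometry of $\BB_e$, and exhibit a class in $K(\BB_e)$ not hit by the restriction map. The key structural input is the tensor decomposition $V = V_2 \otimes V_3$: a point of $\BB_e$ is a full isotropic flag in $V$ preserved by $e = \bar e \otimes \id_{V_3}$, and since $\ker \bar e$ is a line $\ell \subset V_2$, the subspace $\ker e = \ell \otimes V_3$ is $3$-dimensional isotropic, while $\im e = \ell \otimes V_3$ as well. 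So any $e$-stable isotropic flag $0 \subset F_1 \subset F_2 \subset F_3$ must have $F_3 \subset \ker e$ forced once one tracks the constraint $e F_i \subset F_{i-1}$; in fact $\BB_e$ fibers over the variety of flags in the $3$-dimensional space $V_3$ (equivalently $\ker e \cong V_3$), i.e. $\BB_e$ is closely related to the full flag variety $\mathcal{F}\ell(V_3)$ of $\mathrm{SL}(3)$ (or $\mathrm{SO}(3)$-flags), with the residual $\mathrm{SL}(2)$-type choices contributing projective-line bundles. I would pin down $\BB_e$ precisely — I expect it to be an iterated $\bP^1$-bundle over $\bP(V_3) = \bP^2$, or a union of such, with $Z^{red} = \SO(3) \times \{\pm 1\}$ acting through its action on $V_3$ and on $V_2$.

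**Carrying it out.** First I would identify $\BB_e$ as a projective variety with an explicit cell structure, computing $K(\BB_e)$ as a free abelian group of the appropriate rank (from the Bialynicki-Birula decomposition or the iterated bundle structure). Second, I would compute $K^Z(\BB_e)$: since $Z^{red}$ is connected modulo the center, by the argument in Lemma \ref{lem2} this amounts to tracking which strata/components carry genuine versus projective $\SO(3)$-actions; concretely, $K^{\SO(3)}$ of a point is $\Rep(\SO(3))$, and for a $\bP^1$ arising as $\bP(V_2)$ the $\SL(2) = \mathrm{Spin}(3)$-action does not descend to $\SO(3)$, so the relevant equivariant $K$-theory picks up a $\bZ/2$-grading (even versus odd $\SL(2)$-weights). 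Third — the crux — I would locate an irreducible component, or a point, of $\BB_e$ on which the $A_e$-module/sheaf side carries a genuinely projective $\SO(3)$-representation, using the dictionary \eqref{der_eq}–\eqref{bimod}: this is exactly the manifestation of the central extension established combinatorially in Theorem \ref{partition theorem} and Corollary \ref{cor_Je}. The non-surjectivity then follows as in the last paragraph of the proof of Lemma \ref{lem2}: if an irreducible $A_e$-module $L$ carries a projective action of order $n = 2$, then every class in the image of $K^Z(\BB_e) \to K(\BB_e) = K(A_e\text{-}mod)$ has coefficient divisible by $2$ at $[L]$, so $[L]$ itself is not in the image.

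**Main obstacle.** The routine parts are the $K$-theory computations; the real work is making the geometric identification of $\BB_e$ precise enough to see the $\bP^1 = \bP(V_2)$ factor whose $\SL(2)$-equivariant structure obstructs $\SO(3)$-equivariance, i.e. to exhibit the "odd-weight" phenomenon geometrically rather than merely deducing it from the algebra $J$. Equivalently, I must verify that the tautological line bundle on that $\bP(V_2)$-factor, or some natural twist of it supported on a component of $\BB_e$, is a genuine (non-equivariant) $K$-class that lifts only to a class for a central extension of $Z$. Here I would use the explicit description of $\BB_e$ together with the identification $\ker e = \ell \otimes V_3$ and the fact — which I would check directly — that the $\SO(3)$-action on the relevant $\bP^1$-bundle is the one coming from the $3$-dimensional representation, so that the fiberwise $\SL(2)$-structure is genuinely $\bZ/2$-twisted. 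Once this is in hand, invoking Lemma \ref{lem2} (in contrapositive form) or repeating its divisibility argument directly gives the conclusion; alternatively, the Proposition already follows formally from Lemma \ref{lem2} combined with Theorem \ref{partition theorem}, and the geometric argument serves to make the source of the extension transparent.
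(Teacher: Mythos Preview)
Your proposal has two concrete geometric missteps that would prevent the direct argument from going through, and your fallback, while logically valid, is circular for the purposes of this section.

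\medskip

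\textbf{The geometry of $\BB_e$ is not what you assert.} The claim that the constraint $e F_i \subset F_{i-1}$ forces $F_3 \subset \ker e$ is false. Since $\ker e = \im e$ is $3$-dimensional, your claim would give $F_3 = \ker e$ and hence $\BB_e \cong \SL(3)/B$, a single irreducible component. In fact $\BB_e$ has three components: $X_1$ (where indeed $F_3=\ker e$), $X_2$ (where $F_2\subset\ker e$ and the line $\bP(F_2)\subset\bP(V_3)$ is tangent to the conic $Q$), and $X_3$ (where $\bP(F_1)\in Q$). In particular there are flags in $\BB_e$ with $F_3\ne\ker e$, so the picture of $\BB_e$ as an iterated bundle over flags in $V_3$ is not correct globally.

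\medskip

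\textbf{You are looking at the wrong $\bP^1$.} The group $Z^{red}=\SO(3)\times\{\pm 1\}$ acts on the tensor factor $V_3$, not on $V_2$; the $\SO(3)$-action on $V_2$ (hence on $\bP(V_2)$ and on its tautological bundle) is trivial, so no $\PGL(2)$-obstruction can arise there. The relevant $\bP^1$ is the conic $Q\subset\bP(V_3)$, on which $\SO(3)=\PGL(2)$ acts in the standard way; it is the odd-degree line bundles on $Q$ that fail to be $\PGL(2)$-equivariant. The paper exploits exactly this: one takes the open locus $U\subset X_2$ where $\bP(F_1)\notin Q$ and $F_3\ne\ker e$, observes that $U$ is disjoint from $X_1\cup X_3$ so $K(\BB_e)\to K(U)$ is surjective, and then uses that $U$ is a $Z$-equivariant $\bbA^2$-bundle over $Q$. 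Thus non-surjectivity for $\BB_e$ reduces to non-surjectivity of $K^{\PGL(2)}(\bP^1)\to K(\bP^1)$, which is the elementary fact you had in mind but attached to the wrong copy of $\bP^1$. This localization-to-an-open argument is much shorter than the global $K$-theory computation you propose, and it sidesteps the component structure entirely.

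\medskip

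\textbf{On the fallback.} You are right that the divisibility argument in the proof of Lemma~\ref{lem2} (not its statement, which only gives the other implication) combined with Theorem~\ref{partition theorem} yields the Proposition. But the whole point of Section~\ref{section the springer fiber} is to give an argument \emph{independent} of the combinatorics in Section~\ref{compB3}; invoking Theorem~\ref{partition theorem} here defeats that purpose.
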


\proof
The variety  $\BB_e$ has 3 components $X_1$, $X_2$, $X_3$.

Here $X_1$ consists of isotropic flags $(F_1\subset \cdots \subset F_6)$ such that
$F_3=Im(e)=Ker(e)$, thus $X_1\cong SL(3)/B_{SL(3)}$.

The component $X_2$ consists of flags such that $F_2\subset Im(e)=Ker(e)$ and it satisfies
the following condition. Notice that the projectivization  $\bP(Ker(e))$ is identified with $\bP(V_3)$,
so it contains a fixed quadric $Q$; we require that the line $\bP(F_2)$ is tangent to $Q$.

The component $X_3$ consists of flags such that $\bP(F_1)\in Q$. 

\medskip

Let $U$ be  the open subset in $X_2$ consisting of flags such that $\bP(F_1)\not \in Q$ 
and $F_3\ne Ker(e)$. 

Then $U\cap (X_1\cup X_3)=\emptyset$. It follows that the map $K(\BB_e)\to K(U)$ is onto,
so if  \eqref{1} was surjective then the map $K^Z(U)\to K(U)$ would also be surjective.

However we have a $Z$-equivariant map $U\to Q$ with fiber $\bbA^2$. 
We have $K^Z(U)= K^Z(Q)$, $K(U)=K(Q)$, while the map $K^Z(Q)\to K(Q)$ is not onto,
since the class of a vector bundles of odd degree  of $Q\cong \bP^1$ does not equal
 the class of a sheaf equivariant under $Z\cong \PGL(2)\times \{ \pm 1\}$. \qed

\section{Cocenters}%

\subsection{Surjectivity of $\bar{\phi_q}$}  \label{sur_coc}
The goal of this subsection is the proof of the following
\begin{prop}
\label{prop bar phi_q is onto}
Suppose that $q$ is not a root of unity. The map $\bar{\phi_q}$ is onto.
\end{prop}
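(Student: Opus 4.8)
The map $\bar\phi_q$ is a map of cocenters $H_q/[H_q,H_q] \to C = \bigoplus_e C_e$, and the strategy is to exhibit enough explicit elements in the image. Recall from Lusztig's theory that $\phi_q(C_w) = \sum_{d,z}\, h_{w,d,z}|_{v=\sqrt q}\, t_z$ where the sum is over $d\in\mathcal{D}$, and more usefully that for $w$ in a two-sided cell $c$ with $a$-value $a(c)$, the element $v^{a(c)}\phi_q(C_w)$ specializes to $\sum_z \gamma_{w,d,z}\, t_z$-type expressions supported on $J_c$. The plan is to filter both sides by the closure order on two-sided cells: let $H_{\ge c}$ (resp.\ $H_{>c}$) be the span of $C_w$ with $w$ in cells $\ge c$ (resp.\ $>c$) in the $\le_{LR}$ order, similarly for $J$. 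Then $\phi_q$ is filtered and induces on the associated graded a map whose $c$-component lands in $J_c$. So it suffices, cell by cell, to show that the composite $H_{\ge c}/[H,H] \to J_c/[J_c,J_c]$ hits all of $C_e$ (where $e\leftrightarrow c$).

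First I would reduce to a statement purely inside each $J_c$. The key input is that for each two-sided cell $c$ and each distinguished involution $d\in\mathcal D\cap c$, and each $w$ with $w\sim_L d$ (so $w$ is in the left cell of $d$), the "leading term" of $\phi_q(C_w)$ along $J_c$ is, up to an invertible scalar (a power of $\sqrt q$ times a nonzero integer — this is where $q$ not a root of unity is used, to guarantee the relevant $\gamma$ or leading coefficient does not vanish mod the filtration after specialization), equal to $t_w$ plus terms in strictly higher cells. Concretely I would use that $\gamma_{w,d,w}=1$ for appropriate $d$ (Lusztig's property, distinguished involutions act as "partial identities"), so $v^{a(c)}h_{w,d,w^{-1}}$ has constant term $1$. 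This shows that modulo $H_{>c}$ and after the appropriate normalization, the images $\phi_q(C_w)$ span all of $J_c$ (not just its cocenter) — in particular they surject onto $C_e = J_c/[J_c,J_c]$. Running this over all cells $c$ and using the filtration, surjectivity of $\bar\phi_q$ follows.

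The main obstacle is controlling the "error terms": $\phi_q(C_w)$ is not simply a single $t_w$ but a $\Z[v,v^{-1}]$-combination of $t_z$ over various $z$, some in $c$ and some in higher cells, and one must ensure after specializing $v=\sqrt q$ and passing to $\gr$ that the matrix of leading coefficients (indexed by $w\in c$ against $z\in c$) is invertible over $\C$. This is exactly Lusztig's result that $\phi$ becomes an isomorphism $H\otimes_{\Z[v,v^{-1}]}\C((v)) \xrightarrow{\sim} J\otimes_\Z\C((v))$ (or the integral refinement near $v=0$); concretely the change-of-basis matrix between $\{\phi_q(C_w)\}$ and $\{t_z\}$ within $\gr_c$ is unipotent-triangular with respect to a suitable order on $c$, because of the property $\gamma_{w,d,w}=1$ together with the support condition $\gamma_{x,y,z}\ne 0 \Rightarrow x\sim_L z^{-1},\ y\sim_R z^{-1}$. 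The one genuinely $q$-dependent point — and the only place the hypothesis enters for surjectivity — is that the diagonal entries of this matrix are powers of $q$ coming from the normalization $v^{a(c)}$, hence nonzero precisely when $q\ne 0$; the finer non-vanishing needed for injectivity (density of characters of standard modules) is deferred to the Appendix. I would therefore organize the write-up as: (i) recall Lusztig's formula for $\phi$ and the cell filtration; (ii) prove the triangularity/invertibility of the leading-term matrix on each $\gr_c$ using $\gamma_{w,d,w}=1$; (iii) conclude that $\phi_q$ is surjective modulo higher cells, hence $\bar\phi_q$ is surjective by descending induction on cells.
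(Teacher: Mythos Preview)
Your filtration strategy---reducing to surjectivity on each associated graded piece $\gr_c$---is the same first step the paper takes. The problem is the next step: your claim that the matrix of $\gr_c\phi_q$ in the bases $\{C_w\}_{w\in c}$ and $\{t_z\}_{z\in c}$ is ``unipotent-triangular with respect to a suitable order on $c$'' is not justified, and is in fact the heart of the difficulty.

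The properties you invoke---$\gamma_{w,d,w}=1$ and the support conditions $\gamma_{x,y,z}\neq 0\Rightarrow x\sim_L z^{-1},\ y\sim_R z^{-1}$---are statements about the \emph{leading coefficient} (in $v$) of the structure constants $h_{w,d,z}$. They tell you that the matrix $\bigl(v^{a(c)}p_{w,z}(v)\bigr)$ has entries in $\Z[v]$ and reduces to the identity modulo $v$. But after specializing $v=\sqrt{q}$, the higher-order terms of $h_{w,d,z}$ contribute, and nothing in the $\gamma$-calculus gives you a total order on $c$ for which the specialized off-diagonal entries vanish. The cell preorders $\leq_L,\leq_R$ only give the coarse block structure by one-sided cells, not a triangular refinement inside a cell. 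In particular, your conclusion that ``the only place the hypothesis enters is $q\neq 0$'' cannot be right: it would prove surjectivity of $\bar\phi_q$ for every nonzero $q$, whereas the paper's argument (and the refinement via \cite{Xi} noted in the Remark) genuinely uses that $q$ avoids the roots of the Poincar\'e polynomial of $W_f$.

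The paper does not attempt to show that $\gr_c\phi_q$ is surjective as a linear map. Instead it proves, via an identity extracted from \cite[2.4(d)]{cells2}, that the image $\phi_c(H_c)\subset J_c$ is a \emph{left ideal}; then it invokes \cite[Proposition~4.4]{cells4} (this is where the hypothesis on $q$ enters) to see that this left ideal annihilates no simple $J_c$-module; finally a short ring-theoretic lemma shows that any such left ideal surjects onto $HH_0(J_c)$. So the paper never needs to control the full matrix of $\gr_c\phi_q$, only to know enough about the image to force it to hit the cocenter.
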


We fix a 2-sided cell $c$ corresponding to  the conjugacy class of a 
unipotent element $e$. Let $H_c$ be the corresponding cell subquotient of $H$. The map $\phi_q$ restricts to a map 
$\phi_c\colon H_c\to J_c$. Define $\psi\colon H_c\to J_c$
by $\psi(C_w)=t_w$, where $w$ is in $c$.

\begin{lem}
\label{lemIdeals}
Let $A$ be an associative, unital ring and $I$ be a left ideal not annihilating any simple $A$-module.
Then the map $I\to HH_0(A)$ is surjective.
\end{lem}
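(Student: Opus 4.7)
The plan is to show directly that $I + [A,A] = A$, which is equivalent to surjectivity of the composite $I \hookrightarrow A \twoheadrightarrow A/[A,A] = HH_0(A)$.

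The first step is to prove that the two-sided ideal $AIA$ generated by $I$ is contained in $I + [A,A]$. Since $I$ is a left ideal, $AI \subseteq I$, and hence $AIA \subseteq IA$. For any $i \in I$ and $a \in A$, the identity $ia = ai + [i,a]$ writes $ia$ as a sum of $ai \in I$ and $[i,a] \in [A,A]$, so $IA \subseteq I + [A,A]$. Combining these inclusions gives $AIA \subseteq I + [A,A]$. Hence if we can show $AIA = A$, then $A \subseteq I + [A,A]$ and we are finished.

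The main step is then to show that $AIA$ cannot be a proper ideal of $A$. Suppose for contradiction that $AIA$ is proper. By Zorn's lemma applied to proper two-sided ideals of $A$ containing $AIA$, there is a maximal two-sided ideal $M$ with $I \subseteq AIA \subseteq M$. The quotient $A/M$ is then a simple ring with identity; in particular it is a nonzero unital ring, so it has a maximal left ideal and thus a simple left module $S$. Viewed as an $A$-module, $S$ is simple and $M$ acts by zero, so $I \subseteq M$ yields $IS = 0$. This contradicts the hypothesis that $I$ annihilates no simple $A$-module.

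The main subtlety, though more conceptual than a real difficulty, is the passage from the maximal two-sided ideal $M$ to a simple $A$-module annihilated by $I$, which requires one to remember that a simple unital ring always admits a simple left module (via a second application of Zorn's lemma to produce a maximal left ideal in $A/M$). A primitivity-style argument could be attempted instead, but the two-sided ideal route above is the most direct.
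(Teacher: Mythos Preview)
Your proof is correct and follows essentially the same route as the paper: both reduce to showing that the two-sided ideal generated by $I$ equals $A$ via the commutator identity $ia \equiv ai \pmod{[A,A]}$, and then argue that a proper two-sided ideal would be contained in a maximal one annihilating a simple module. What you describe as a more direct alternative to ``a primitivity-style argument'' is in fact exactly the standard proof that every maximal two-sided ideal is primitive, which is precisely the fact the paper invokes.
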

\begin{proof}[Proof of Lemma \ref{lemIdeals}]
Let $I$ be a left ideal not annihilating any simple $A$-module,
and let $\mathcal{I}$ be the two-sided ideal generated by $I$. Then
\[
\mathcal{I}=\sets{\sum i_ja_j}{a_j\in A, i_j\in I}.
\]
Since $i_ja_j=a_ji_j \mod [A,A]$, and $a_ji_j\in I$, we see that
the images of $\mathcal{I}$ and $I$  in $HH_0(A)$ coincide. Now, if $\mathcal{I}$ does 
not annihilate any simple $A$-module, then $\mathcal{I}=A$, this follows from
the standard fact that any maximal two-sided ideal is primitive. 
\end{proof}
\begin{proof}[Proof of Proposition \ref{prop bar phi_q is onto}]
Consider the filtration by the two-sided cell ideals on $H$ and on $J$.
The homomorphism $\phi$ is compatible with these filtrations, thus
it suffices to show surjectivity of the map of associated graded spaces
for the induced filtration on $HH_0$. Thus it is enough 
 to check that the image of $\phi_c$ maps surjectively 
to the cocenter of $J_c$. 

Using \cite{cells2}, 2.4 (d), we see that:
$$\psi(C_{x_1}C_{x_2})t_{x_3}=\psi(C_{x_1}\psi^{-1}(t_{x_2}t_{x_3}))$$
for $x_1,\, x_2, \, x_3\in c$. 
Applying the antiautomorphisms of $H$ and $J$ given by
 $C_w\mapsto C_{w^{-1}}$ and $t_w\mapsto t_{w^{-1}}$ we get:
 $$t_{x_1} 
 \psi(C_{x_2}C_{x_3})=\psi( \psi^{-1}(t_{x_1}t_{x_2}) C_{x_3}).$$

Fix $x_1, x_2$ and let $x_3$ vary over the set of distinguished involutions of $c$;
summing the resulting expressions we get:
$$t_{x_1}\phi_c(C_{x_2}) = \phi_c(\psi^{-1} (t_{x_1}t_{x_2})).$$
This shows that $\phi_c(H_c)$ is a left ideal in $J_c$. By 
\cite[Proposition 4.4]{cells4},  $\phi_c(H_c)$ does not annihilate 
any simple $J_c$-module. The proposition now follows from 
Lemma \ref{lemIdeals}.
\end{proof}
\begin{rem}
As we see from the proof of the proposition and \cite{Xi}, surjectivity of $\bar{\phi_q}$ holds in fact 
whenever $q$ is not a root of the Poincar\'{e} polynomial of $W_f$.
\end{rem}

\subsection{Discussion of Conjecture \ref{conj}}
We now present some examples and remarks pertaining to  Conjecture \ref{conj} presented in the Appendix.
\begin{ex}
 Suppose that $Z_e$ is connected and simply connected. Then it is not hard to see that
every function satisfying condition (1) in the statement of the conjecture is pulled back under the projection to the first factor, thus
 $\bar{\O}_e \cong \O(Z_e^{red})^{Z_e^{red}}$ is the ring of conjugation invariant functions on $Z_e^{red}$.
 This agrees with the fact that in this case $J_e$ is isomorphic to a matrix algebra over $\Rep(Z_e^{red})$.
 \end{ex}
 \begin{ex}
 \label{our-example}
  Suppose that $G=\Spin(7)$, $\LG=\mathrm{PSp}(6)$ and $e$ are as in the previous section, thus $Z_e^{red}= \PGL(2)$ (we have
  passed to the adjoint version of $\LG$ to simplify notation). Then it is easy to see that $\bar{\O}_e$ is the direct sum of two subspaces:
$pr_1^*\left(\O(Z_e^{red})^{Z_e^{red}}\right)$ (functions pulled back from the first factor) and the one dimensional 
space of functions supported on the image of the set $\{(x,y)\in \PGL(2) \times \GL(2)\ |\ xy=-yx\}$. This agrees with the computation of $C_e$
in subsection \ref{cocSO6}.
\end{ex}
\begin{rem}\label{last_remark}
The Conjecture provides an effective criterion for appearance of nontrivial central
extensions. For example, suppose that $Z_e^{red}$ is connected modulo the center of $\LG$ but its derived group
is not simply connected. Fix an isomorphism $Z_e^{red}=H/K$ where $H$ is a connected
reductive group and $K$ is a finite subgroup in its center.

Suppose that one can find $x,y\in H$ so that $xyx^{-1}y^{-1}$ is a nontrivial element of $K$
and the image of $x$ in $Z_e^{red}$ is regular in $G$.

Then it is easy to see that the map $\O (Z_e^{red})^{Z_e^{red}}\to \bar{O}_e$ is not onto,
which implies appearance of a nontrivial central extension.
\end{rem} 

\begin{rem}
It is interesting to compare Conjecture \ref{conj} with the conjecture at the end of \cite{QX}.
The latter asserts an isomorphism 
\begin{equation}\label{conj_QX}
J_e\cong K^{Z_e}((\BB_e^{\Ce^*})^2)
\end{equation}
where $\BB_e^{\Ce^*}$ is the space of the fixed points of the multiplicative group $\Ce^*$ acting on the Springer fiber
via the homomorphism $SL(2)\to \LG$ coming from an $sl(2)$ triple $(e,h,f)$. 

Assuming \eqref{conj_QX} one can define a map  $J_e\to K(Coh^{Z_e}(Z_e))$, $[\F]\mapsto [pr_*a^*\F]$ where
$pr:Z_e\times \BB_e^{\Ce^*}\to Z_e$ is the projection and $a:Z_e\times \BB_e^{\Ce^*}\to (\BB_e^{\Ce^*})^2$
is given by $(z,x)\mapsto (x,z(x))$. It is not hard to check that it factors through a map $C_e\to K(Coh^{Z_e}(Z_e))$.

One can compose it with the natural map $K(Coh^{Z_e}(Z_e))\to \O_e$
(where we use notations of section \ref{conj_descr})
sending $[\F]$ to $f_{\F}:(g,\gamma)\mapsto Tr(g,\F_\gamma)$ where $\F_\gamma$ denotes the (derived)
fiber of $\F$ at $\gamma$. It is easy to see that this map lands in the space of functions satisfying condition (1)
of Conjecture \ref{conj}. One expects that the composed map lands in $\bar{\O_e}$ and yields the isomorphism 
of Conjecture \ref{conj}. 

\end{rem}

\begin{rem}
Let us point out another relation between conjectural isomorphism \eqref{conj_QX} and presence of nontrivial central
extensions in the description of $J_e$. We illustrate this connection imposing a simplifying assumption that $Z_e$ is connected
modulo the center of $\LG$. We also assume that the center of $\LG$ acts trivially on the set $Y$ (see footnote \ref{footn} in the Introduction).

Then in the absence of central extensions the specialization 
of $J_e$ at any central character would be isomorphic to the matrix algebra $\Mat_n(\Ce)$, $n=
\dim H^*(\BB_e)$. Assuming \eqref{conj_QX}, we see that the center of $J_e$ is identified 
with $K(\Rep(Z_e))$, so that a central character corresponds to
a semisimple conjugacy class $s\in Z_e$. The specialization $K^{Z_e}((\BB_e^{\Ce^*})^2)
\otimes_{K(\Rep(Z_e))} \Ce_s$ maps naturally to $K(Coh(\BB_{e,s}^{\Ce^*})^2)\otimes \Ce$,
where $\BB_{e,s}^{\Ce^*}$ is the space of simultaneous fixed points of $e$, $s$ and $\Ce^*$,
one can check that this map is compatible with the convolution product.

The image of this map is clearly contained in the invariants of the centralizer 
of $s$ in $Z_e$
acting on $K(Coh(\BB_{e,s}^{\Ce^*})^2)\otimes \Ce$.
It is not hard to deduce from the results of \cite[Chapter 8]{CG} that dimension of  
$K(Coh(\BB_{e,s}^{\Ce^*})^2)\otimes \Ce$ equals $n^2$, thus the rank of the algebra homomorphism
$$K^{Z_e}((\BB_e^{\Ce^*})^2)
\otimes_{K(\Rep(Z_e))} \Ce_s \to K(Coh(\BB_{e,s}^{\Ce^*})^2)\otimes \Ce$$
is less than $n^2$ provided that the action of the centralizer on $K(Coh(\BB_{e,s}^{\Ce^*})^2)$
is nontrivial. This implies that the corresponding specialization of $J_e$ is not isomorphic to
$\Mat_n(\Ce)$, so nontrivial central extensions have to appear.

In particular, for the example considered in the present work (see Example \ref{our-example})
one lets $s$ be the only order two conjugacy class in $Z_e$, then one can check that $\BB_{e,s}^{\Ce^*}$ 
is isomorphic to the disjoint union of three projective lines and 6 isolated points. The centralizer 
of $s$ in $Z_e$ has two connected components, an element in the nontrivial component acts by
a nontrivial permutation (namely, a product of three commuting transpositions) of the six isolated points.
This yields another proof of appearance of nontrivial central extensions in this case, conditional on 
\eqref{conj_QX}.
\end{rem}
\section{Appendix by Roman Bezrukavnikov, Alexander Braverman and David Kazhdan}

\subsection{Injectivity of $\bar{\phi_q}$}
 \begin{prop}
Suppose that $q$ is not a root of unity. Then map $\bar{\phi_q}$ is injective.
\end{prop}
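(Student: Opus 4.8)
The plan is to prove injectivity of $\bar{\phi_q}\colon H_q/[H_q,H_q]\to C$ by a duality/pairing argument: exhibit enough linear functionals on $C$ that separate points and pull back to linearly independent functionals on $H_q/[H_q,H_q]$. The natural source of such functionals is the trace characters of representations. Recall (from \cite{cells4}) that for $q$ not a root of unity the specialized algebra $H_q$ is semisimple, so $H_q/[H_q,H_q]$ is spanned by — and its dimension equals the number of — irreducible $H_q$-modules, equivalently it is dual to the span of the characters $\chi_\pi$, $\pi\in\mathrm{Irr}(H_q)$. The same holds for $J$ after extending scalars appropriately: the asymptotic algebra $J$ has a description of its simple modules, and Lusztig's homomorphism $\phi_q$ induces a bijection on (suitable) irreducibles. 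So abstractly both cocenters have the same finite dimension once $q$ avoids the bad set, and it suffices to prove $\bar{\phi_q}$ is surjective — which is Proposition \ref{prop bar phi_q is onto} — OR injective; but surjectivity plus equality of dimensions would already suffice. The catch is that $C=J/[J,J]$ is genuinely bigger than the span of ordinary characters in general (that is exactly the point of the central extensions!), so one cannot simply count. Hence the argument must be run directly on $H_q$ side.

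First I would make precise the statement that a class $\sum a_w C_w$ lies in $[H_q,H_q]$ if and only if $\chi_\pi\left(\sum a_w C_w\right)=0$ for every irreducible tempered (equivalently, every standard) representation $\pi$ of $H_q$; this is the "density of characters" statement referenced in the introduction, and for $H_q$ with $q$ not a root of unity it follows from semisimplicity together with the fact that the standard modules span the Grothendieck group. Then I would show that each such trace functional on $H_q/[H_q,H_q]$ factors through $\bar{\phi_q}$: concretely, one uses that Lusztig's $\phi_q$ induces an equivalence (or at least a bijection on irreducibles and an isomorphism on $K$-theory of module categories) between $H_q$-mod and $J$-mod after the appropriate completion, so every character $\chi_\pi^{H_q}$ is the pullback along $\phi_q$ of the character of the corresponding $J$-module $\pi^J$. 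Therefore if $\bar{\phi_q}(\bar h)=0$ in $C$, then $\chi^J_{\pi^J}(\phi_q(h))=0$ for all $\pi^J$, hence $\chi^{H_q}_\pi(h)=0$ for all $\pi$, hence $\bar h=0$ in $H_q/[H_q,H_q]$. This is the skeleton of the proof.

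The main obstacle, and the step that requires real care, is controlling the behavior of $\phi_q$ on module categories precisely enough to justify "$\chi^{H_q}_\pi=\phi_q^*\,\chi^J_{\pi^J}$" for a spanning set of $\pi$. Lusztig's results in \cite{cells4} give that $\phi_q$ becomes an isomorphism after a suitable localization/completion of the base ring, and that it is compatible with the $a$-function filtration; one has to verify that the induced maps on Grothendieck groups of finite-dimensional modules are compatible with taking traces, and that the relevant class of representations (standard modules, or tempered modules) is preserved and spans. A subtlety is that $J$ itself may have "extra" simple modules coming from genuinely projective representations of the centralizers (the central extensions), so the bijection on irreducibles must be set up as: simples of $H_q$ at the given $q$ correspond to simples of $J$ on which a certain central element acts by a root-of-unity-free scalar — i.e. one restricts to the part of $J$-mod "seen by" $H_q$ — and one must check this restricted collection of characters still separates points of the image $\phi_q(H_q)/[\,\cdot,\cdot\,]\subset C$. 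Once the dictionary is set up carefully this is formal; assembling it correctly is where essentially all the work lies, and it is plausibly why this part is deferred to the Appendix. I would organize the write-up as: (1) semisimplicity of $H_q$ and character-separation on its cocenter; (2) $\phi_q$ induces the character correspondence; (3) conclude injectivity by the contrapositive above.
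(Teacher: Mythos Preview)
Your overall strategy matches the paper's: show that traces of a spanning family of representations separate the cocenter of $H_q$, and that each such trace factors through $\phi_q$. The paper executes this by invoking density of characters (citing \cite{CH}) to reduce to checking $Tr(\rho(h))=0$ for all irreducibles, then passes to standard modules $R_{s,u,\psi}$ (which form a basis of the Grothendieck group of finite-dimensional modules by Kazhdan--Lusztig/Chriss--Ginzburg), and finally uses Lusztig's \cite[Theorem 4.2]{cells4} to identify each standard module, up to its class in the Grothendieck group, with the restriction $\phi_q^*(M_{s,u,\psi})$ of an explicit $J$-module.

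The gap in your write-up is the assertion that $H_q$ is semisimple for $q$ not a root of unity. This is false: the affine Hecke algebra is infinite-dimensional, has infinitely many simple modules, and has nontrivial $\mathrm{Ext}^1$ between them (indeed standard modules are typically not semisimple --- that is the content of the upper-triangular multiplicity matrix). So you cannot deduce density of characters from semisimplicity, nor argue by equating finite dimensions of the two cocenters. Density of characters for $H_q$ is a genuine theorem requiring real input; the paper takes it as a black box. Once you replace ``follows from semisimplicity'' by a citation to that theorem, and make explicit that the representations you pull back from $J$ are the \emph{standard} modules (so that Lusztig's result applies directly and you sidestep the ``extra'' $J$-simples you correctly worry about), your argument is exactly the paper's.
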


\proof Suppose that $h\in H_q$ is such that $\phi_q(h)\in [J,J]$, we need to check that 
$h\in [H_q,H_q]$.

By density of characters (Theorem 1.3(1) in \cite{CH}) it suffices to check that 
 \begin{equation}\label{Tr0}
 Tr(\rho(h))=0
 \end{equation}
for every  irreducible representation $\rho$ of $H_q$. 

Recall the standard representation $R_{s,u,\psi}$ of $H_q$ constructed from a triple
$(s,u,\psi)$ where $s,u\in \LG$ are such that $s$ is semisimple, $u$ is unipotent, 
$sus^{-1}=u^q$ and $\psi$ is an irreducible representation of the group of components
of the simultaneous centralizer of $s$ and $u$, see \cite[Chapter 8]{CG}. 
As proved in {\em loc. cit.} (developing the original theorem of Kazhdan and Lusztig
\cite{KL}) the set of $(s,u,\rho)$ is also in bijection with irreducible representations
of $H_q$ so that the matrix of multiplicities of irreducible modules
 in $R_{s,u,\psi}$
 is upper triangular with ones on the diagonal for an appropriate
partial order on the set of triples $(s,u,\psi)$.
It follows that the classes $R_{s,u,\psi}$ form a basis
in the Grothendieck group of finite dimensional $H_q$-modules.
Thus it suffices to check \eqref{Tr0} when $\rho=R_{s,u,\psi}$ is a standard
module. 
According to \cite[Theorem 4.2]{cells4} for generic $q$ such a module is 
isomorphic to $\phi_q^*(M_{s,u,\psi})$ for a $J$-module $M_{s,u,\psi}$. 
It follows that for any 
$q$ these two modules have the same class in the Grothendieck group, 
so trace vanishing follows from the condition  $\phi_q(h)\in [J,J]$. 
\qed

\begin{rem}
Alternatively, the statement about isomorphism
of Grothendieck groups follows from \cite[Theorem 3.4]{cells3}.
While the statement about Grothendieck groups 
suffices for our purposes, in fact we have have $M_{s,u,\psi}|_{H_q}=R_{s,u,\psi}$ 
as $H_q$-modules by \cite[Corollary 2.6]{BrKa}.
\end{rem}

\begin{rem}
The definition of $J$ and $\phi_q$ (for $q$ a prime power) can be generalized: in \cite{BrKa} one finds the definition of an algebra $\bf J$ and a homomorphism
$\Phi$ from ${\bf H}=S_c^\infty (G)$, the algebra of locally constant compactly supported functions on the $p$-adic group,
such that restricting $\Phi$ to the space of Iwahori bi-invariant vectors one recovers homomorphism $\phi$ (or rather its base change
to $\Ce$).
We expect that Theorem \ref{cocenter_Thm} can be generalized to that setting:
\end{rem}

\begin{conj}\label{Phi}
The map $\Phi$ induces an isomorphism
of cocenters. 
\end{conj}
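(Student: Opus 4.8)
The plan is to reproduce, in the generality of the algebra $\mathbf{J}$ and homomorphism $\Phi$ of \cite{BrKa}, the two-step strategy used to prove Theorem \ref{cocenter_Thm}: surjectivity of $\bar\Phi$ by a filtration-plus-ideal argument, and injectivity by density of characters. Throughout one works over $\mathbf{H}=S_c^\infty(G)$, which is non-unital; since $\Phi$ and the cocenter are compatible with the Bernstein decomposition, one reduces to a single Bernstein block, which is Morita equivalent to a unital algebra, so Lemma \ref{lemIdeals} becomes applicable. The first preliminary step is therefore to set up the decomposition $\mathbf{J}=\bigoplus_e \mathbf{J}_e$ (indexed by the relevant unipotent data) together with a two-sided-ideal filtration on $\mathbf{H}$ — the unipotent-support/wavefront filtration — with respect to which $\Phi$ is filtered, so that it suffices to treat the associated-graded pieces $\Phi_e\colon \mathbf{H}_e\to \mathbf{J}_e$.

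For surjectivity, mimic the proof of Proposition \ref{prop bar phi_q is onto}: it is enough to show that $\Phi_e(\mathbf{H}_e)$ surjects onto $HH_0(\mathbf{J}_e)$ for each $e$, and by Lemma \ref{lemIdeals} this follows once one knows (i) $\Phi_e(\mathbf{H}_e)$ is a left ideal in $\mathbf{J}_e$, and (ii) this left ideal annihilates no simple $\mathbf{J}_e$-module. Point (i) should come from an intertwining identity for $\Phi$ and the ``naive'' map (the analog of the map $\psi$, $C_w\mapsto t_w$) of exactly the shape of \cite{cells2}, 2.4(d), applied with the third argument ranging over the relevant idempotents and summed; point (ii) is the analog of \cite[Proposition 4.4]{cells4}, i.e.\ that $\Phi_e$ is ``large enough'' on each block. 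Both are structural statements about $\Phi$ that are expected to hold in the \cite{BrKa} setting and are essentially what the conjecture asserts should exist.

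For injectivity, suppose $\Phi(h)\in[\mathbf{J},\mathbf{J}]$ and aim to show $h\in[\mathbf{H},\mathbf{H}]$. By the density-of-characters theorem for $S_c^\infty(G)$ (the analog, via the trace Paley--Wiener theorem, of Theorem 1.3(1) in \cite{CH}) it suffices to check $\mathrm{Tr}(\rho(h))=0$ for every irreducible admissible $\rho$. Standard modules span the Grothendieck group of finite-length $\mathbf{H}$-modules, so it is enough to treat $\rho=R_{s,u,\psi}$; by the generalization of \cite[Theorem 4.2]{cells4} (for generic parameters) together with \cite[Corollary 2.6]{BrKa}, such a module is the restriction along $\Phi$ of a $\mathbf{J}$-module, hence has class in the Grothendieck group equal to $\Phi^*$ of a $\mathbf{J}$-module for all parameters, and trace vanishing on those classes follows from $\Phi(h)\in[\mathbf{J},\mathbf{J}]$.

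The main obstacle I expect is point (i) of the surjectivity argument: one must know that $\Phi$ satisfies the Lusztig-type intertwining relation with the naive map in the full generality of \cite{BrKa}, where $\mathbf{H}$ is not built from Coxeter generators and Kazhdan--Lusztig cell combinatorics is replaced by the more delicate bookkeeping of that paper; proving this identity (and the companion non-annihilation statement, and the precise matching of the filtration on $S_c^\infty(G)$ with the decomposition of $\mathbf{J}$) is essentially the content of the conjecture. A secondary, more routine issue is carrying the non-unitality of $S_c^\infty(G)$ through the argument compatibly with $\Phi$, which should dissolve upon passing to Bernstein blocks and their unital Morita models, but must be verified.
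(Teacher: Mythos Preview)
The statement you are addressing is stated in the paper as a \emph{conjecture}, not a theorem; the paper gives no proof of it. Immediately after stating Conjecture~\ref{Phi}, the authors only remark that they ``expect that the proof of injectivity above can be generalized using density of tempered characters \cite{Ka_cu},'' and say nothing further about surjectivity in this generality. So there is no argument in the paper to compare your proposal against.

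Your outline is a reasonable heuristic blueprint---and its injectivity half is exactly the one-line expectation the authors record---but you yourself correctly identify that the substantive steps (the Lusztig-type intertwining identity for $\Phi$ in the \cite{BrKa} setting, the non-annihilation analogue of \cite[Proposition~4.4]{cells4}, and the compatibility of a suitable filtration on $S_c^\infty(G)$ with the decomposition of $\mathbf{J}$) are precisely the content of the conjecture and are not established anywhere. As written, your proposal is a program, not a proof; that matches the status the paper assigns to the statement.
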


We expect that the proof of injectivity above can be generalized
using density of tempered characters \cite{Ka_cu}. 

\subsection{Conjectural description of the cocenter} \label{conj_descr}
We now present a conjecture (joint with Yakov Varshavsky) describing $C_e$, and hence a summand in the space of invariant
distributions on the $p$-adic group, in terms of the Langlands dual group.

Let $Z_e^{red}$ be the quotient of $Z_e$ by its unipotent radical. Let $\Co_e\subset (Z_e^{red})^2$ be the algebraic variety parametrizing pairs of commuting elements 
in $Z_e$. Let $\O_e$ be the ring of regular conjugation invariant functions on $\Co_e$ and $\bar{\O}_e$ be the subring of functions 
$f$ satisfying the following two conditions:

 (1) for every $x\in Z_e^{red}$ the function $f_x: y\mapsto f(x,y)$ is locally constant, i.e.
$f_x$ descends to a function $\bar{f}_x$ on $\pi_0(Z_e^{red}(x))$ where $Z_e^{red}(x)$ is the centralizer of $x$ in $Z_e^{red}$;

(2) for every $x\in Z_e^{red}$ the function $\bar{f}_x$ is a linear combination of characters of irreducible representations of 
 $\pi_0(Z_e^{red}(x))$ occurring in cohomology of the space $\BB_e^x$ of simultaneous fixed points of $e$ and $x$ on $\BB$.

\begin{conj} \label{conj}
We have a canonical isomorphism $C_e\cong \bar{\O}_e$.
\end{conj}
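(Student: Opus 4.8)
The plan is to treat each cell summand separately and to realise $C_e$ as a coherent sheaf on the affine variety $Z_e^{red}/\!/Z_e^{red}$. Recall from \cite{BO} and \eqref{bimod} that $J_e\cong K^{Z_e^{red}}(Y_e\times Y_e)$; in particular $J_e$ is a module-finite algebra over $\Rep(Z_e^{red})=\O(Z_e^{red}/\!/Z_e^{red})$ — its centre, as in the worked example — so $C_e$ is a finitely generated $\Rep(Z_e^{red})$-module, and $\bar{\O}_e$ projects to the same base via $pr_1$. The first step is to construct the comparison map $C_e\to\bar{\O}_e$. One candidate is the geometric map sketched in the last Remark of Section~\ref{compB3}, built from \eqref{conj_QX}: it sends $[\F]$ to the function $(g,\gamma)\mapsto\tr(g,\F_\gamma)$ on $\Co_e$, where $\F_\gamma$ is the derived fibre over $\gamma$ and $g$ acts because it centralises $\gamma$; one checks directly that it annihilates $[J_e,J_e]$ and has image in the functions satisfying condition~(1). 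An unconditional construction should come from the general description of the Hochschild homology of the convolution algebra $K^{Z_e^{red}}(Y_e\times Y_e)$ as $Z_e^{red}$-equivariant $K$-theory of the inertia groupoid of $Y_e$, together with an identification of the latter with functions on $\Co_e$ compatible with \eqref{bimod}.

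The isomorphism I would then prove fibrewise over $Z_e^{red}/\!/Z_e^{red}$. For a semisimple conjugacy class $x$ in $Z_e^{red}$ write $J_{e,x}=J_e\otimes_{\Rep(Z_e^{red})}\C_x$ for the specialisation at the central character $x$. The base change identity $C_e\otimes_{\Rep(Z_e^{red})}\C_x\cong HH_0(J_{e,x})$ is automatic, since $HH_0$ is a cokernel and $-\otimes\C_x$ is right exact, so it suffices to compute $HH_0(J_{e,x})$ fibre by fibre. This I would do using the geometry: by the noncommutative Springer resolution (\eqref{der_eq}, \eqref{bimod}, \cite{BL}, \cite{BeMi}) and the Kazhdan--Lusztig--Ginzburg picture \cite[Ch.~8]{CG}, the specialisation $J_{e,x}$ is governed by $H^*(\BB_e^x)$ as a $\pi_0(Z_e^{red}(x))$-module, and $HH_0(J_{e,x})$, with its natural refinement by characters of $\pi_0(Z_e^{red}(x))$, is identified with the span of the characters of those irreducible $\pi_0(Z_e^{red}(x))$-representations that occur in $H^*(\BB_e^x)$. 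This is precisely the fibre of $\bar{\O}_e$ at $x$: condition~(1) holds automatically because $J_{e,x}$ is finite-dimensional, and condition~(2) is exactly the cohomological constraint.

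It remains to globalise. As $C_e$ and $\bar{\O}_e$ are both finitely generated over $\Rep(Z_e^{red})$, Nakayama reduces matters to showing that the comparison map is defined over the whole base, lands in $\bar{\O}_e$, and restricts on each fibre to the isomorphism just described. Well-definedness, and the fact that the output is a \emph{regular} conjugation invariant function on $\Co_e$, amount to a family version of the classification: the characters of the simple $J_e$-modules, and more generally the local data above, must vary algebraically as the central character moves over $Z_e^{red}/\!/Z_e^{red}$. This should be extracted from the construction of the standard modules $R_{s,u,\psi}$ (\cite[Ch.~8]{CG}) together with the derived-equivalence description \eqref{der_eq}, \eqref{bimod} of $J_e$ over its centre, which also controls condition~(2) in families. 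As a consistency check one specialises to the running example $G=\Spin(7)$, $\LG=\mathrm{PSp}(6)$: Corollary~\ref{cor_Je} and Example~\ref{our-example}, via the computations of Section~\ref{cocSO6}, already identify both sides, which proves the conjecture in that case.

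The main obstacle, I expect, is the local input: a family/sheaf-theoretic description of $J_e$ over its centre $Z_e^{red}/\!/Z_e^{red}$ together with the computation of the equivariant cocenter of its fibres in terms of $H^*(\BB_e^x)$. This is an enhancement of \eqref{bimod} and \cite{BL} from the single maximal ideal $\fm_e\subset\O(\N)$ to all of $Z_e^{red}/\!/Z_e^{red}$, and the nontrivial central extensions that are the subject of this paper are exactly what obstruct the naive picture $J_e\cong\Mat_n(\Rep(Z_e^{red}))$ and must be carried through the family; at the special (non-semisimple) fibres one must further see how the non-semisimplicity of $J_{e,x}$ is accounted for by higher cohomology of $\BB_e^x$ carrying new characters of $\pi_0(Z_e^{red}(x))$. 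Once this structure is in place, the identification $C_e\cong\bar{\O}_e$ reduces to the fibrewise bookkeeping above; without it the comparison map of the last Remark is presently available only conditionally on \eqref{conj_QX}.
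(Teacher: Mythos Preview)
The statement you are attempting to prove is labelled \emph{Conjecture} in the paper, and the paper does not prove it. Immediately after stating it, the authors write that ``the conjecture is the subject of a forthcoming work by Bezrukavnikov, Ciubotaru, Kazhdan and Varshavsky,'' and the surrounding text offers only supporting evidence: verification in the case where $Z_e^{red}$ is connected and simply connected (Example~1), verification in the running $\Spin(7)/\mathrm{PSp}(6)$ example via the explicit cocenter computation (Example~\ref{our-example} and Corollary~\ref{cor_Je}), a remark tying it to the conjectural isomorphism \eqref{conj_QX} of Qiu--Xi, and a citation of \cite{ACR} for the elliptic part. There is therefore no proof in the paper to compare your proposal against.

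Your proposal is an outline of a plausible strategy rather than a proof, and you are candid about this: you flag that the comparison map is currently available only conditionally on \eqref{conj_QX}, and that the ``main obstacle'' is a family-over-$Z_e^{red}/\!/Z_e^{red}$ enhancement of \eqref{bimod} together with a fibrewise identification of $HH_0(J_{e,x})$ with the span of characters of $\pi_0(Z_e^{red}(x))$ occurring in $H^*(\BB_e^x)$. Neither of these ingredients is supplied in the paper or in the references you cite; in particular, the step ``$HH_0(J_{e,x})$ is identified with the span of the characters of those irreducible $\pi_0(Z_e^{red}(x))$-representations that occur in $H^*(\BB_e^x)$'' is essentially a restatement of the conjecture at a single fibre, not an independent input. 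The Nakayama/globalisation step also needs more: fibrewise isomorphism of finitely generated modules over a non-Artinian base does not by itself imply a global isomorphism without control of the map at non-closed points or a flatness/coherence argument you have not indicated. So as it stands the proposal is a reasonable heuristic roadmap consistent with the remarks in the paper, but it does not close the gap that the authors themselves leave open.
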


\begin{rem}
Supporting evidence for the conjecture is provided by the result of Aubert, Ciubotaru and Romano \cite{ACR} where the elliptic part of 
the unipotent cocenter is described in terms of $\LG$.
\end{rem}

The conjecture is the subject of a forthcoming work by Bezrukavnikov, Ciubotaru, Kazhdan and Varshavsky.

\bibliography{BDD_structure_affine_asympt_Hecke_biblio.bib}

\end{document}